\documentclass[11pt]{article}

\usepackage{amsmath, mathrsfs, amsthm, amssymb, stmaryrd, cancel, hyperref, relsize}
\usepackage{graphicx}
\usepackage{xfrac}
\hypersetup{pdfstartview={XYZ null null 1.25}}
\usepackage[all]{xy}
\usepackage[normalem]{ulem}

\theoremstyle{plain}
\newtheorem{Thm}{Theorem}[section]
\newtheorem{Lem}[Thm]{Lemma}
\newtheorem{Cor}[Thm]{Corollary}
\newtheorem{Prop}[Thm]{Proposition}

\newtheorem{Not}[Thm]{Notation}

\theoremstyle{definition}
\newtheorem{Def}[Thm]{Definition}
\newtheorem{Ex}[Thm]{Example}

\newtheorem{Prob}[Thm]{Problem}

\makeatletter
\newcommand{\labitem}[2]{%
\def\@itemlabel{\textbf{#1}}
\item
\def\@currentlabel{#1}\label{#2}}
\makeatother

\DeclareMathOperator{\A}{\mathcal A}
\DeclareMathOperator{\id}{\mathbf{id}}
\DeclareMathOperator{\ide}{\mathbf{id}}
\DeclareMathOperator{\G}{\Gamma_{\scriptscriptstyle{D}}}
\DeclareMathOperator{\dom}{\mathbf{dom}}
\DeclareMathOperator{\ran}{\mathbf{ran}}
\DeclareMathOperator{\comp}{\mathbin{;}}
\def\conv{\smile}
\DeclareMathOperator{\au}{\mathit{a}^\uparrow}
\DeclareMathOperator{\Pc}{\mathit{P}^*}

\DeclareMathOperator{\mP}{\mathcal{P}}
\DeclareMathOperator{\hf}{\hat{\mathit{f}}}
\DeclareMathOperator{\mPs}{\mathcal{P}^*}
\DeclareMathOperator{\Pcd}{\mathit{P}^{*\delta}}
\DeclareMathOperator{\pu}{\mathit{p}^\uparrow}
\DeclareMathOperator{\bw}{\bigwedge}

\newcommand{\Qs}{\bcancel{\mathit{Q}}^{\mathit n}}
\newcommand{\N}{\mathbb{N}}
\newcommand{\hfp}{\hat{\mathit{f}}^{\scriptscriptstyle+}}

\newcommand{\compC}{\comp_{\scriptscriptstyle \gamma}}
\newcommand{\domC}{\dom_{\scriptscriptstyle \gamma}}
\newcommand{\ranC}{\ran_{\scriptscriptstyle \gamma}}
\newcommand{\convC}{\conv_{\scriptscriptstyle \gamma}}
\newcommand{\gammap}{\gamma^{\scriptscriptstyle+}}
\newcommand{\idC}{\id_{\scriptscriptstyle \gamma}}
\newcommand{\oC}{0_{\scriptscriptstyle \gamma}}

\def\set#1{{ \{ #1\} }}

\def\c#1{{\mathcal #1}}

\title{Meet-completions and ordered domain algebras}

\author{R Egrot and R Hirsch}

\begin{document}
\maketitle
\begin{abstract}
Using the well-known equivalence between meet-completions of posets and standard closure operators we show a general method for constructing meet-completions for isotone poset expansions. With this method we find a meet-completion for ordered domain algebras which simultaneously serves as the base of a representation for such algebras, thereby proving that ordered domain algebras have the finite representation property. We show that many of the equations defining ordered domain algebras are preserved in this completion but associativity, ({\bf D2}) and ({\bf D6}) can fail.
\let\thefootnote\relax\footnotetext{Corresponding author: Robin Hirsch, Department of Computer Science, University College,
Gower Street,
London WC1E 6BT,
email: r.hirsch@ucl.ac.uk
}\\
Keywords: Finite representation property, completion, partially ordered set, ordered domain algebra.
\end{abstract}

\begin{section}{Introduction}
When considering algebras of binary relations, it is generally not the case that a finite representable algebra has a representation on a finite base.  Indeed, in any signature which includes the identity,  intersection and composition operators, any representation of the \emph{Point Algebra} \cite{VilKau86} interprets each diversity atom as a dense linear order and so the representation is necessarily infinite.  

There are two well-known cases where we do have the finite representation property.  For the signature with identity, converse and composition only, the Cayley representation maps an algebra element $a$ to the binary relation $\set{(x, x;a):x\in\c A}$ over the algebra itself.  At the other extreme, for the signature consisting of Boolean operators only we may modify the standard Stone representation (which represents elements as unary relations) and represent an element $a$ as the identity relation over the ultrafilters containing $a$. Similarly, for a signature with solely an order relation $\leq$ (in other words a poset, $P$ say) we may construct a representation whose base $P^*$ consists of the upward closed subsets of $P$. An element $p$ of $P$ is represented as the identity restricted to $\hat p=\set{u\in P^*:p\in u}$. Clearly $P^*$ is finite if $P$ is. Note that as well as providing the base of a representation, $(P^*, \cup, \cap)$ forms a complete distributive lattice and is a completion of $P$ (see section \ref{S;McompClos}).

An interesting case, then, is where the signature includes both composition and an order relation.  For the signature with composition, converse, the domain operator and an ordering (Ordered Domain Algebra)   a construction in \cite{HirMik13} had  aspects of the Cayley representation but also aspects of the upward closed set representation for a poset.  Each element of an  ordered domain algebra (ODA) is represented as a set of pairs  of upward closed subsets of the algebra, but in order to make the representation work for the non-Boolean operators, these upward closed subsets are required to have certain other closure properties.  \cite{Bre77} had already provided a complete, finite set of axioms defining the class of representable ordered domain algebras, the construction in \cite{HirMik13} also showed that a finite representable ODA has a representation on a finite base.  Here we show, further, that the operators may be lifted from an ordered domain algebra $\c A$ to an algebra $\G[\c A]$  whose universe is the set of all closed subsets of $\c A$, where the ordering $\leq$ is reverse inclusion $\supseteq$ and where the other operators are lifted from $\c A$.  We show that $\G[\c A]$ is a completion of $\c A$ and it obeys many  of the equations defining ODAs.  On the other hand, rather  important properties, like the associativity of composition, are shown to be fallible in $\G[\c A]$.

Our main results are the following. We show how to lift the isotone operators of a poset expansion to operators on a meet-completion of the poset in definition~\ref{def:bullet}.   We prove that certain inequalities are preserved when passing to this completion (given certain conditions on the inequalities and the completion) in corollary~\ref{C;sahl}.  Focusing on ODAs, we define a particular completion $\G$ in definitions~\ref{D;closed}, \ref{def:closed2} and restate the result that this completion can act as the base of a representation of an ODA in theorem~\ref{thm:rep}.  In proposition~\ref{P;AxHold} we show that all but three of the equations defining ODAs are preserved in this completion, and in examples~\ref{E;2fail}, \ref{E;6fail} and \ref{E;Afail} we show that those three equations can fail in the completion.

The  remainder of this paper is divided as follows. In the next section we give the basic definitions for meet-completions and standard closure operators and provide a proof of the well-known correspondence between them.  In section \ref{S;InPreserve} we explain how to extend isotone operators on a poset to a meet-completion of that poset. This provides a method for extending poset meet-completions to meet-completions of isotone poset expansions. We investigate some general rules governing the preservation of inequalities by meet-completions of isotone poset expansions using this method.  In section \ref{S;reps} we define ordered domain algebras, and in section \ref{S;ODAcomp} we apply the considerations of sections \ref{S;McompClos} and \ref{S;InPreserve} to construct a completion for ODAs and determine which ODA equations it preserves. We show how this completion can be used as the base of a representation for that algebra.  Finally in section \ref{S:conclusions} we draw some conclusions from these results and offer suggestions for further work in this area.

\end{section}

\begin{section}{Meet-completions and closure operators}\label{S;McompClos}
The material in this section is well-known, dating back to the pioneering work of Ore \cite{Ore43a,Ore43,Ore44}. The aim here is to provide formulations best suited for the work we undertake in later sections.

  Throughout, for any unary function $f$ and subset $S$ of the domain of $f$ we write $f[S]$ for $\set{f(s):s\in S}$, more generally for an $n$-ary function $g$ (written prefix)  we may apply $g$ pointwise to a sequence of   subsets $S_i$ of the domain of $g$ ($i=1,\ldots,n$) and write $g[S_1\times \ldots\times S_{n}]$ to denote $\set{g(s_1,\ldots, s_n): s_i\in S_i\;\mbox{ for } 1\leq i\leq n}$.  Two exceptions to this notational convention, where the functions are not written prefix, are the unary operator ${}^\conv$ and   the binary $;$ and we simply write $S^\conv,\; S;T$ for $\set{s^\conv:s\in S},\; \set{s;t:s\in S,\; t\in T}$ respectively.   For any subset $S$ of a poset $P$ we write $S^\uparrow$ for $\set{p\in P: \exists s\in S,\; s\leq p}$. For $p\in P$ we write $\pu$ as shorthand for $\{p\}^\uparrow$.

\begin{Def}[Completion]\label{def:completion}
Given a poset $P$ we define a completion of $P$ to be a complete lattice $Q$ and an order embedding $e\colon P\to Q$.
\end{Def}

We say $e\colon P\to Q$ is a \emph{meet-completion} when $e[P]$ is \emph{meet-dense} in $Q$. That is, when $q=\bw\{e(p):p\in P \text{ and } e(p)\geq q\}$ for all $q\in Q$.

\begin{Def}[$\Pc$]\label{D;Pstar}
If $P$ is a poset define $\Pc$ to be the complete lattice of up-sets (including $\emptyset$) of $P$ ordered by reverse inclusion ($S_1\leq S_2 \iff S_1\supseteq S_2$). The order dual $\Pcd$ is the lattice of up-sets ordered by inclusion with bottom element $\emptyset$.
\end{Def}

It's easy to see that the map $\iota\colon P\to \Pc$ defined by $\iota(p)=\pu$ defines a meet-completion of $P$ (note though that $\iota$ will not map the top element of $P$ (if it exists) to the top element of $\Pc$, as the top element of $\Pc$ will be $\emptyset$). This particular completion plays an important role in the theory of meet-completions.

\begin{Def}[Closure operator]\label{D;closop}
Given a poset $P$, a closure operator on $P$ is a map $\Gamma\colon P\to P$ such that
\begin{enumerate}
\item $p\leq\Gamma(p)$ for all $p\in P$,
\item $p\leq q\implies \Gamma(p)\leq \Gamma(q)$ for all $p,q\in P$, and
\item $\Gamma(\Gamma(p))=\Gamma(p)$ for all $p\in P$.
\end{enumerate}
\end{Def}

Following \cite{Ern83} we say a closure operator $\Gamma$ on $\Pc$ or $\Pc^\delta$  is \emph{standard} when  $\Gamma(\pu)=\pu$ for all $p\in P$.

It is well-known that a meet-completion $e\colon P \to Q$ defines a standard closure operator $\Gamma_e\colon \Pcd\to\Pcd$ by $\Gamma_e(S)=\{p\in P:e(p)\geq \bw e[S]\}$ (we take the dual of $\Pc$ as otherwise condition 1 of Definition \ref{D;closop} fails). In this case $Q$ is isomorphic to the lattice $\Gamma_e[\Pc]$ of $\Gamma_e$-closed subsets of $\Pc$. This isomorphism is given by the map $h_e\colon Q\to\Gamma_e[\Pc]$ defined by $h_e(q)=\{p\in P:e(p)\geq q\}$. Note that we are purposefully taking $\Pc$ rather than $\Pc^\delta$ here as we want to order by reverse inclusion. This is technically an abuse of notation as $\Gamma_e$ is originally defined on $\Pc^\delta$, but as these structures have the same carrier hopefully our meaning is clear.  

Conversely, whenever $\Gamma$ is a standard closure operation on $\Pcd$ it induces a meet-completion $e_{\Gamma}\colon P\to \Gamma[\Pc]$ defined by $e_\Gamma(p)=\pu$. For $S\in\Pc$ we have $\Gamma_{e_\Gamma}(S)=\{p\in P:\pu\geq\bw \{\pu:p\in S\}\}=\{p:\pu\subseteq \Gamma(S)\}=\Gamma(S)$, so $\Gamma_{e_\Gamma}=\Gamma$. Moreover, for all $p\in P$ we have $e_{\Gamma_e}(p)=\pu=h_e\circ e(p)$ so the diagram in figure \ref{F;com} commutes.

For convenience we summarize the preceding discussion below:

\begin{Not}\label{N;con}
For meet-completion $e:P\to Q$, and standard closure operator $\Gamma\colon \Pcd\to\Pcd$ we define:
\begin{itemize}
\item[] $\begin{aligned}\mathbf{\Gamma_e}: \Pcd&\to\Pcd\\
S&\mapsto \{p\in P:e(p)\geq \bw e[S]\}
\end{aligned}$
\item[] $\begin{aligned} \mathbf{h_e}:Q&\to\Gamma_e[\Pc]\\
q&\mapsto \{p\in P:e(p)\geq q\}\end{aligned}$
\item[] $\begin{aligned} \mathbf{h_e^{-1}}:\Gamma_e[\Pc]&\to Q\\
S&\mapsto \bw e[S]\end{aligned}$
\item[] $\begin{aligned}\mathbf{e_\Gamma}:P &\to \Gamma[\Pc]\\
p&\mapsto \pu
\end{aligned}$
\end{itemize}
\end{Not}

We have seen,  
with notation from \ref{N;con}:  
\begin{Lem}
For any meet completion $e:P\rightarrow Q$ and any standard closure operator $\Gamma\colon \Pcd\to\Pcd$
\begin{enumerate}
\item $\Gamma_{e_\Gamma}=\Gamma$
\item $e_{\Gamma_e} = h_e\circ e$
\end{enumerate}
\end{Lem}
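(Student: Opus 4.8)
The statement merely records the two computations carried out in the discussion preceding Notation~\ref{N;con}, so the plan is to make those explicit and to flag the one place where care is needed.

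For (1), the strategy is to unfold $\Gamma_{e_\Gamma}$ directly from its definition in Notation~\ref{N;con}, using $e_\Gamma(p)=\pu$. Fix an up-set $S\in\Pc$. First I would compute the meet $\bw e_\Gamma[S]=\bw\{\pu:p\in S\}$ taken in the lattice $\Gamma[\Pc]$. The point to be careful about is that $\Gamma[\Pc]$ is ordered by \emph{reverse} inclusion, so this meet is the smallest (under $\subseteq$) $\Gamma$-closed up-set containing every $\pu$, namely $\Gamma(\bigcup_{p\in S}\pu)=\Gamma(S)$, the last equality holding because $S$ is already an up-set. Substituting this back gives $\Gamma_{e_\Gamma}(S)=\{p\in P:\pu\geq\Gamma(S)\}$, and since $\geq$ in $\Pc$ means $\subseteq$, and $\pu\subseteq\Gamma(S)\iff p\in\Gamma(S)$ as $\Gamma(S)$ is an up-set, this collapses to $\Gamma(S)$. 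I should also note that standardness of $\Gamma$ guarantees each $\pu$ lies in $\Gamma[\Pc]$, so $e_\Gamma$ genuinely lands in the codomain its definition asserts.

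For (2), I would simply evaluate both sides at an arbitrary $p\in P$. By definition $e_{\Gamma_e}(p)=\pu$, while $h_e(e(p))=\{q\in P:e(q)\geq e(p)\}$ by Notation~\ref{N;con}. Because $e$ is an order embedding (Definition~\ref{def:completion}), $e(q)\geq e(p)$ holds exactly when $q\geq p$, so the right-hand set is $\{q:q\geq p\}=\pu$ as well; hence the two maps agree pointwise.

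The only real subtlety, and thus the main thing to get right, is the bookkeeping of the two opposite orderings in (1): the closure operator is defined on $\Pcd$ (ordered by inclusion, so that condition~1 of Definition~\ref{D;closop} reads $S\subseteq\Gamma(S)$), whereas the meet defining $\Gamma_{e_\Gamma}$ is taken in $\Gamma[\Pc]$ (ordered by reverse inclusion). Once one tracks which $\leq$ is which, both identities reduce to the short calculations above; part (2) in particular is immediate from the order-embedding property.
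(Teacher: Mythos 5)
Your proposal is correct and follows essentially the same route as the paper, which proves this lemma via the inline computation preceding Notation~\ref{N;con}: for (1) the identity $\Gamma_{e_\Gamma}(S)=\{p\in P:\pu\geq\bw\{\pu:p\in S\}\}=\{p:\pu\subseteq\Gamma(S)\}=\Gamma(S)$, and for (2) the pointwise check $e_{\Gamma_e}(p)=\pu=h_e\circ e(p)$ using that $e$ is an order embedding. Your write-up merely makes explicit the step the paper leaves implicit, namely that the meet in $\Gamma[\Pc]$ (ordered by reverse inclusion) of the sets $\pu$, $p\in S$, is $\Gamma(S)$.
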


\begin{figure}
\[\xymatrix{P\ar[r]^e\ar[d]_{e_{\Gamma_e}} & Q\ar@{->}[dl]^{h_e}\\
\Gamma_e[\Pc]}\]
\caption{The equivalence between meet-completions and standard closure operators.}
\label{F;com}
\end{figure}

\begin{Lem}\label{L;isomunique}
If $e_1\colon P\to Q_1$ and $e_2\colon P \to Q_2$ are meet-completions of $P$ and $g\colon Q_1\to Q_2$ is an isomorphism such that $g\circ e_1= e_2$, then $g$ is unique with this property. 
\end{Lem}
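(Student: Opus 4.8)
The plan is to use meet-density to pin down any candidate isomorphism on every element of $Q_1$, leaving no freedom once its behaviour on $e_1[P]$ is fixed. Concretely, I would suppose that two isomorphisms $g, g'\colon Q_1\to Q_2$ both satisfy $g\circ e_1 = e_2 = g'\circ e_1$, and aim to derive $g = g'$.

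First I would fix an arbitrary $q\in Q_1$ and invoke meet-density of $e_1[P]$ in $Q_1$ to write $q = \bw\{e_1(p): p\in P,\ e_1(p)\geq q\}$; abbreviating $S_q = \{p\in P: e_1(p)\geq q\}$, this reads $q = \bw e_1[S_q]$. Next I would use that $g$ and $g'$, being order isomorphisms between complete lattices, preserve arbitrary meets. Applying this together with the hypothesis $g\circ e_1 = e_2$ gives $g(q) = g(\bw e_1[S_q]) = \bw g[e_1[S_q]] = \bw e_2[S_q]$, and the identical computation for $g'$ yields $g'(q) = \bw e_2[S_q]$. Since the right-hand sides coincide and $q$ was arbitrary, I would conclude $g = g'$, establishing uniqueness.

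The only step requiring any care --- and the one I would flag as the crux --- is the claim that $g$ commutes with the infinitary meet $\bw e_1[S_q]$. This is precisely where the hypothesis that $g$ is an \emph{isomorphism} (rather than merely an embedding or a one-directional meet-preserving map) does its work: an order isomorphism between complete lattices carries greatest lower bounds to greatest lower bounds, hence preserves $\bw$ over arbitrary subsets. Everything else is bookkeeping, and the brevity of the argument reflects the fact that the substantive content has already been absorbed into the definition of meet-completion, whose defining equation $q = \bw\{e(p): e(p)\geq q\}$ is exactly what lets a single element be reconstructed from the images of $P$ lying above it.
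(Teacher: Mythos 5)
Your proof is correct, but it pins down $g$ by a different mechanism than the paper does. The paper fixes $q\in Q_1$ and shows, via the chain of equivalences $e_2(p)\geq g(q)\iff e_1(p)\geq q\iff e_2(p)\geq h(q)$ (using only that $g$ and $h$ preserve \emph{and reflect} order, together with $g\circ e_1=e_2=h\circ e_1$), that the sets $\{p\in P: e_2(p)\geq g(q)\}$ and $\{p\in P:e_2(p)\geq h(q)\}$ coincide; it then concludes $g(q)=h(q)$ from meet-density of $e_2[P]$ in $Q_2$. You instead invoke meet-density of $e_1[P]$ in $Q_1$ to write $q=\bw e_1[S_q]$ and push the meet through $g$, using the fact that an order isomorphism between complete lattices preserves arbitrary meets. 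Both arguments are sound, and you correctly identify and justify the crux of yours. The trade-off: your route packages the use of the isomorphism hypothesis into one clean, standard fact (meet preservation), but that fact genuinely needs invertibility --- a mere order embedding between complete lattices need not preserve infinite meets. The paper's computation never uses any infinitary preservation, only order preservation and reflection, so it actually establishes a slightly stronger statement: the map satisfying $g\circ e_1=e_2$ is unique even among order \emph{embeddings} $Q_1\to Q_2$, not just among isomorphisms.
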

\begin{proof}
Suppose $h$ is another such isomorphism. Then for all $p\in P$, and for all $q\in Q_1$, we have 
\begin{align*}
 e_2(p)\geq g(q) &\iff g\circ e_1(p)\geq g(q)\iff e_1(p)\geq q \\
&\iff h\circ e_1(p)\geq h(q)\iff e_2(p)\geq h(q),
\end{align*}
 so $\{p\in P:e_2(p)\geq g(q)\}=\{p\in P:e_2(p)\geq h(q)\}$ and thus by meet-density we are done. 
\end{proof}

\begin{Thm}\label{T;charac}
If $e\colon P\to Q$ is a meet-completion then there is a unique isomorphism $h_e$ between $Q$ and $\Gamma_e[\Pc]$ such that the diagram in figure \ref{F;com} commutes.

Moreover, if $e_1\colon P\to Q_1$ and $e_2\colon P\to Q_2$ are meet-completions such that there is an isomorphism $h\colon Q_1\to Q_2$ with $h\circ e_1=e_2$ then $\Gamma_{e_1}=\Gamma_{e_2}$.
\end{Thm}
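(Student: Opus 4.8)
The plan is to handle the theorem's two assertions separately, leaning on the material already assembled before the statement. For the first assertion, the existence of a commuting isomorphism $h_e$ is essentially contained in the discussion preceding Notation~\ref{N;con}: there it is recorded that $\Gamma_e$ is a standard closure operator, that $h_e\colon Q\to\Gamma_e[\Pc]$ is an isomorphism, and that $e_{\Gamma_e}=h_e\circ e$, which is exactly the commutativity of the diagram in figure~\ref{F;com}. So the only genuinely new content of the first assertion is uniqueness.

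For uniqueness I would observe that $e_{\Gamma_e}\colon P\to\Gamma_e[\Pc]$ is itself a meet-completion, since every standard closure operator on $\Pcd$ induces one. Thus the commutativity condition $h_e\circ e=e_{\Gamma_e}$ places $h_e$ precisely in the situation of Lemma~\ref{L;isomunique}, applied to the meet-completions $e\colon P\to Q$ and $e_{\Gamma_e}\colon P\to\Gamma_e[\Pc]$. Any isomorphism $g\colon Q\to\Gamma_e[\Pc]$ making the diagram commute satisfies $g\circ e=e_{\Gamma_e}$, so Lemma~\ref{L;isomunique} forces $g=h_e$ and uniqueness follows at once.

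For the second assertion I would argue directly from the definition $\Gamma_{e_i}(S)=\{p\in P:e_i(p)\geq\bw e_i[S]\}$. The key fact is that an order isomorphism between complete lattices preserves arbitrary meets, so from $e_2=h\circ e_1$ we obtain $e_2[S]=h[e_1[S]]$ and hence $\bw e_2[S]=h\big(\bw e_1[S]\big)$ for every up-set $S$. Since $h$ also reflects the order, for each $p\in P$ the relation $e_2(p)\geq\bw e_2[S]$ holds if and only if $h(e_1(p))\geq h\big(\bw e_1[S]\big)$, which in turn holds if and only if $e_1(p)\geq\bw e_1[S]$; comparing the defining sets yields $\Gamma_{e_1}(S)=\Gamma_{e_2}(S)$ for every $S$. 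The calculations throughout are routine; the one point needing care is this meet-preservation of $h$, which is where completeness of the lattices is used, and I expect it to be the crux rather than a real obstacle.
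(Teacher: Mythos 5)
Your proposal is correct and takes essentially the same route as the paper's own proof: the paper likewise reads off existence and commutativity of $h_e$ from the discussion preceding Notation~\ref{N;con}, gets uniqueness by invoking Lemma~\ref{L;isomunique} (your step of checking that $e_{\Gamma_e}$ is itself a meet-completion is exactly the hypothesis needed to apply that lemma), and proves the second assertion by the same direct computation in which $h$ preserves the meet $\bw e_1[S]$ and reflects the order. There are no substantive differences, only that you make explicit a couple of points the paper leaves terse.
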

\begin{proof}
The isomorphism required has been given as $h_e: q \mapsto \{p\in P:e(p)\geq q\}$. That this is an isomorphism is easy to show using the fact that $e:P\to Q$ is a meet-completion. Uniqueness follows from Lemma \ref{L;isomunique}. 

Finally, if $h\colon Q_1\to Q_2$ with $h\circ e_1=e_2$ then 
\begin{align*}
\Gamma_{e_2}(S)&=\{p\in P:e_2(p)\geq \bw e_2[S]\}\\
&=\{p\in P:h\circ e_1(p)\geq \bw h\circ e_1[S]\}\\
&=\{p\in P:h\circ e_1(p)\geq h(\bw e_1[S])\}\\
&=\{p\in P:e_1(p)\geq \bw e_1[S]\}\\
&=\Gamma_{e_1}(S)
\end{align*}
\end{proof}

\end{section}

\begin{section}{Meet-completions and Cartesian products}
\begin{Def}[$e^n$]\label{L;e2}
If $P$ is a poset then given a map $e\colon P\to Q$ we can define a map $e^n\colon P^n\to Q^n$ by \begin{equation*}e^n((p_1,...,p_n))=(e(p_1),...,e(p_n)).\end{equation*}
\end{Def}

\begin{Lem}\label{L;topComp}
If $e\colon P\to Q$ is a meet-completion and $n\geq 2$ then $e^n\colon P^n\to Q^n$ will be a meet-completion if and only if $P$ has a top element $\top$ and $e(\top)$ is the top element of $Q$. 
\end{Lem}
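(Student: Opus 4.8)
The plan is to separate two routine structural facts from the single real issue. Since $Q$ is a complete lattice, the product $Q^n$ under the coordinatewise order is again a complete lattice in which meets are computed coordinatewise; and $e^n$ is an order embedding precisely because $e$ is, as $e^n(\bar p)\leq e^n(\bar p')$ iff $e(p_i)\leq e(p_i')$ for every $i$ iff $p_i\leq p_i'$ for every $i$. So the only thing the biconditional is really about is meet-density of $e^n[P^n]$ in $Q^n$, and the whole argument reduces to analysing when $\bar q=\bigwedge\{e^n(\bar p): e^n(\bar p)\geq \bar q\}$ holds for every $\bar q\in Q^n$. I would first record a reformulation of the top condition: for $q\in Q$ put $A_q=\{p\in P:e(p)\geq q\}$. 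Meet-density of $e$ gives $q=\bigwedge e[A_q]$, so $A_q=\emptyset$ forces $q=\bigwedge\emptyset=\top_Q$, while $A_{\top_Q}=\{p:e(p)=\top_Q\}$. Hence $A_q\neq\emptyset$ for all $q$ iff some $p$ has $e(p)=\top_Q$; and since $e$ is an order embedding any such $p$ is forced to be a top of $P$ (from $e(p')\leq\top_Q=e(p)$ we get $p'\leq p$ for all $p'$), while conversely a top $\top$ with $e(\top)=\top_Q$ exhibits such a $p$. Thus \emph{"$P$ has a top $\top$ with $e(\top)=\top_Q$"} is equivalent to \emph{"$A_q\neq\emptyset$ for every $q\in Q$"}.

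For the ($\Leftarrow$) direction I would fix $\bar q=(q_1,\dots,q_n)$ and note that the comparison set is exactly the box $e[A_{q_1}]\times\cdots\times e[A_{q_n}]$, since $e^n(\bar p)\geq\bar q$ iff $p_i\in A_{q_i}$ for each $i$. Under the top hypothesis every $A_{q_i}$ is nonempty (as $\top\in A_{q_i}$ always), so the infimum of this box in the product lattice is the tuple of coordinate infima, giving $(\bigwedge e[A_{q_1}],\dots,\bigwedge e[A_{q_n}])=(q_1,\dots,q_n)=\bar q$ by meet-density of $e$ in each coordinate. This yields meet-density of $e^n$, hence that it is a meet-completion.

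For ($\Rightarrow$) I would argue contrapositively: if $P$ has no top mapping to $\top_Q$, then by the reformulation no $p$ satisfies $e(p)=\top_Q$, so in particular there is some $p_0$ with $e(p_0)\neq\top_Q$. Using $n\geq2$, set $\bar q=(\top_Q,e(p_0),\dots,e(p_0))$. Then $A_{\top_Q}=\emptyset$, so the comparison set is empty and its meet is $\top_{Q^n}$, whose second coordinate is $\top_Q\neq e(p_0)$; hence $\bar q\neq\bigwedge\{e^n(\bar p):e^n(\bar p)\geq\bar q\}$ and meet-density fails, so $e^n$ is not a meet-completion.

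The step I expect to be the crux is the behaviour of the empty factor: the whole biconditional turns on the fact that the meet of the empty set is $\top$, so a single empty coordinate-fiber $A_{q_i}$ collapses the box-meet to $\top_{Q^n}$ instead of to $\bar q$. This is also exactly where $n\geq2$ enters, since for $n=1$ an empty fiber forces $q_1=\top_Q$ and the identity $q_1=\bigwedge\emptyset=\top_Q$ still holds, so no top hypothesis is needed. I would take care to justify the "infimum of a box equals the tuple of coordinate infima" claim properly, as it genuinely uses nonemptiness of all factors (so that each coordinate may be varied independently while staying inside the box); that is the one routine-looking lattice computation that actually carries content here.
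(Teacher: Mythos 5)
Your proof is correct and takes essentially the same approach as the paper's: the forward direction rests on the comparison set being nonempty (guaranteed by $\top$ mapping to $\top_Q$) so that the coordinatewise infimum recovers $(q_1,\dots,q_n)$, and the converse uses exactly the paper's counterexample of a tuple with $\top_Q$ in one coordinate and a non-top image $e(p_0)$ in another. Your fiber sets $A_q$ and the explicit ``meet of a box with nonempty factors is the tuple of coordinate meets'' computation are just a cleaner packaging of the paper's direct lower-bound verification.
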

\begin{proof}
Suppose first that $P$ has top $\top$ and $e(\top)$ is the top element of $Q$. Since a finite product of complete lattices is again a complete lattice it remains only to check that $e^n[P^n]$ is meet-dense in $Q^n$. Given $(q_1,...,q_n)\in Q^n$ we claim that $(q_1,...,q_n)=\bw\{e^n((p_1,...,p_n)):e(p_i)\geq q_i$ for all $i\in\{1,...,n\}\}$. Since $e(\top)$ is the top element of $Q$ we can be sure that  this infimum is of a non-empty set. Now, $(q_1,...,q_n)$ is clearly a lower bound, so suppose $(q'_1,...,q'_n)$ is another such lower bound. Then, for $i\in\{1,...,n\}$, we have $q'_i\leq e(p_i)$ for all $p_i\in P$ with $q_i\leq e(p_i)$, so by meet-density of $e[P]$ in $Q$ we have $q'_i\leq q_i$, and so $(q'_1,...,q'_n)\leq (q_1,...,q_n)$ as required.

For the converse note that $Q$ has a top element $\top_Q$ since it is complete, and if $e(p)<\top_Q$ for all $p\in P$ then $Q^n$ will contain elements of form $(e(p_1),\top_Q,e(p_3),...,e(p_n))$ which are not the infimum of any subset of $e[P]$. Example \ref{E;notComp} illustrates this issue.
\end{proof}

\begin{Ex}\label{E;notComp}
Let $P = \{p\}$, let $Q = \{q,\top\}$ with $q<\top$ and let $e(p)=q$. Then $e\colon P\to Q$ is represented in the following diagram.
\[\xymatrix{ && \top\ar@{-}[d]\\
p\ar@{|->}[rr]_e && q
}\]
Now, $P^2=P$ and $e^2\colon P^2\to Q^2$ is as follows:
\[\xymatrix{ && (\top,\top)\ar@{-}[dr]\ar@{-}[dl]\\
&(\top,q)&&(q,\top)\\
(p,p)\ar@{|->}[rr]_{e^2} && (q,q)\ar@{-}[ur]\ar@{-}[ul]
}\]
Clearly $e^2\colon P^2\to Q^2$ is not a meet-completion.
\end{Ex}

Although $e^n\colon P^n\to Q^n$ is not, in general, a meet-completion we note that the problem elements cannot be below any member of $e^n[P^n]$, and so a true meet-completion can be obtained by simply removing them. We make a formal definition below: 

\begin{Def}[$\Qs$]
If $\top_{Q^n}$ is the top element of $Q^n$ we obtain $\Qs$ from $Q^n$ by removing all elements $(q_1,...,q_n)$ of $Q^n$ such that $\{(p_1,...,p_n)\in P^n : (e(p_1),...,e(p_n))\geq (q_1,...,q_n)\}=\emptyset$ and $(q_1,...,q_n)< \top_{Q^n}$. The ordering of the remaining elements is left unchanged. 
\end{Def}

\begin{Lem}
If $e\colon P\to Q$ is a meet-completion then the map $e^n\colon P^n \to \Qs$ is a meet-completion ($e^n$ is defined as in definition \ref{L;e2}).
\end{Lem}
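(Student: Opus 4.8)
The plan is to check the three requirements in turn: that $\Qs$ is a complete lattice, that $e^n\colon P^n\to\Qs$ is an order embedding, and that $e^n[P^n]$ is meet-dense in $\Qs$. It helps to first record that, by the defining condition, $\Qs=D\cup\{\top_{Q^n}\}$, where $D$ is the set of those $(q_1,\dots,q_n)\in Q^n$ lying below some member of $e^n[P^n]$; that is, $D$ is the down-set generated by $e^n[P^n]$ in $Q^n$. Writing $\mathbf p=(p_1,\dots,p_n)$, we have $e^n[P^n]\subseteq D\subseteq\Qs$, and since $e$ is an order embedding so is $e^n\colon P^n\to Q^n$; as $\Qs$ carries the inherited order this immediately gives that the corestriction $e^n\colon P^n\to\Qs$ is an order embedding. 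This step is routine.

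For completeness I would invoke the standard fact that a subset of a complete lattice which contains the top element and is closed under the infima (computed in the ambient lattice) of its nonempty subsets is itself a complete lattice, with the same meets. Here $\top_{Q^n}\in\Qs$ by definition, so it remains to check closure under nonempty meets in $Q^n$. The crucial point is that $D$ is downward closed: if $e^n(\mathbf p)\geq\mathbf q\geq\mathbf q'$ then $e^n(\mathbf p)\geq\mathbf q'$. Thus, given a nonempty $A\subseteq\Qs$ with $\mathbf q=\bw_{Q^n}A$, either every member of $A$ equals $\top_{Q^n}$ and $\mathbf q=\top_{Q^n}\in\Qs$, or some $\mathbf a\in A$ lies in $D$, and then $\mathbf q\leq\mathbf a$ forces $\mathbf q\in D\subseteq\Qs$. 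Hence $\Qs$ is a complete lattice whose meets agree with those of $Q^n$.

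The real work is meet-density. Fixing $\mathbf q=(q_1,\dots,q_n)\in\Qs$, I must show $\mathbf q=\bw_{\Qs}\{e^n(\mathbf p):e^n(\mathbf p)\geq\mathbf q\}$. If $\mathbf q=\top_{Q^n}$ and $\top_{Q^n}\notin e^n[P^n]$, the indexing set is empty and the empty meet in $\Qs$ is its top $\top_{Q^n}=\mathbf q$, as needed; this is exactly why $\top_{Q^n}$ is retained. Otherwise $\mathbf q\in D$, so there is a witness $\mathbf p^0=(p^0_1,\dots,p^0_n)$ with $e(p^0_j)\geq q_j$ for every $j$, and the indexing set is nonempty; since meets in $\Qs$ agree with those of $Q^n$ it suffices to prove $\bw_{Q^n}\{e^n(\mathbf p):e^n(\mathbf p)\geq\mathbf q\}=\mathbf q$ coordinatewise. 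In coordinate $i$ the inequality $\geq q_i$ is clear, while for $\leq q_i$ I would hold all coordinates but the $i$-th fixed at the witness: the tuples $(p^0_1,\dots,p^0_{i-1},p,p^0_{i+1},\dots,p^0_n)$ with $e(p)\geq q_i$ all lie above $\mathbf q$, so the $i$-th coordinates occurring in the indexing set include every $e(p)$ with $e(p)\geq q_i$, whose meet is $q_i$ by meet-density of $e\colon P\to Q$.

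I expect this coordinatewise reduction to be the main obstacle. The difficulty is that the set of tuples lying above $\mathbf q$ need not be a product of one-dimensional sets, so one cannot directly apply meet-density of $e$ in each coordinate separately. The fixed witness tuple $\mathbf p^0$ is what circumvents this, and its existence is guaranteed precisely because $\mathbf q$ is not one of the elements discarded in passing from $Q^n$ to $\Qs$. This also explains why the top element, for which no witness need exist, must be handled on its own.
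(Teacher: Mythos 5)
Your proof is correct and follows essentially the same route as the paper: the paper notes that every non-top element of $\Qs$ lies below some member of $e^n[P^n]$ (giving completeness and nonemptiness of the indexing sets) and then ``adapts the first part of the proof of Lemma \ref{L;topComp}'', which is precisely your argument with the witness tuple $\mathbf p^0$ playing the role that the top element $\top$ played there. The only difference is that you spell out explicitly (the down-set structure of $\Qs$, the empty-meet case for $\top_{Q^n}$, and the coordinate-filling trick) what the paper leaves implicit.
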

\begin{proof}
$\Qs$ remains a complete lattice as aside from $\top_{Q^n}$ every element of $\Qs$ is below an element of $e^n[P^n]$. This also means that unless $(q_1,...,q_n)=\top_{Q^n}$ we have $\{e^n((p_1,...,p_n)):e(p_i)\geq q_i$ for all $i\in\{1,...,n\}\}\neq\emptyset$, and so we can adapt the first part of the proof of lemma \ref{L;topComp} to get the result. 
\end{proof}

Note that $\Qs$ embeds into $Q$ as a subalgebra.

\begin{Lem}
Given meet-completion $e\colon P \to Q$ and $n\geq 2$, the map $e^n\colon P^n\to Q^n$ is a meet-completion if and only if $Q^n=\Qs$.
\end{Lem}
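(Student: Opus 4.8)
The plan is to prove the two implications separately, after making one framing observation: $\Qs$ is by construction a subset of $Q^n$, since forming $\Qs$ only ever \emph{deletes} elements and never changes the order on those that remain. Hence the asserted equality $Q^n=\Qs$ is equivalent to the statement that no element of $Q^n$ is deleted in the passage to $\Qs$, and this is the form in which I would attack both directions.

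The reverse implication is immediate. If $Q^n=\Qs$, then the map $e^n\colon P^n\to Q^n$ is literally the same map as $e^n\colon P^n\to\Qs$, and the preceding lemma already establishes that the latter is a meet-completion, so there is nothing more to do.

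For the forward implication I would assume $e^n\colon P^n\to Q^n$ is a meet-completion and argue that the deletion criterion is never met. By definition an element $(q_1,\ldots,q_n)$ is deleted only if it lies strictly below $\top_{Q^n}$ and the set $\set{(p_1,\ldots,p_n)\in P^n : e^n((p_1,\ldots,p_n))\geq(q_1,\ldots,q_n)}$ is empty. But meet-density expresses $(q_1,\ldots,q_n)$ as the infimum of exactly the elements of $e^n[P^n]$ lying above it, i.e.\ as $\bw$ of the images of this very set; were the set empty, this infimum would be the empty meet $\top_{Q^n}$, contradicting $(q_1,\ldots,q_n)<\top_{Q^n}$. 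So the deletion set is vacuous and $Q^n=\Qs$. (Equivalently, I could route this through Lemma~\ref{L;topComp}: a meet-completion $e^n$ forces $e(\top)=\top_Q$, whence $e^n((\top,\ldots,\top))=\top_{Q^n}$ dominates every element of $Q^n$, so once again no $(q_1,\ldots,q_n)$ has an empty set of images above it.)

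The argument is genuinely short, and I do not expect a serious obstacle. The only point that needs care is the convention that an empty meet equals the top element, since it is precisely this convention that links the meet-density hypothesis to the deletion criterion defining $\Qs$; everything else is routine bookkeeping with the definition.
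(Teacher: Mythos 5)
Your proof is correct, but it takes a genuinely different route from the paper's. The paper's proof is a one-line reduction: it asserts that $Q^n=\Qs$ holds if and only if $P$ has a top element $\top$ with $e(\top)=\top_Q$, and then cites lemma \ref{L;topComp}, which characterises exactly when $e^n\colon P^n\to Q^n$ is a meet-completion by that same top-element condition; both implications are thus routed through the top-element criterion, with the equivalence between $Q^n=\Qs$ and that criterion left to the reader (its harder direction needs a small argument of its own, e.g.\ that if $\top_Q\notin e[P]$ then elements such as $(e(p),\top_Q,\ldots,\top_Q)$ get deleted). You bypass the top-element condition entirely in your main line: for the reverse implication you observe that when $Q^n=\Qs$ the map $e^n\colon P^n\to Q^n$ is literally the map $e^n\colon P^n\to\Qs$, which the preceding (unlabelled) lemma already certifies as a meet-completion; for the forward implication you argue directly that meet-density together with the convention $\bw\emptyset=\top_{Q^n}$ makes the deletion criterion defining $\Qs$ vacuous. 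Both approaches are sound. The paper's buys an explicit display of the three-way equivalence (between $Q^n=\Qs$, the top-element condition, and $e^n$ being a meet-completion) at the cost of an unproved intermediate step; yours is more self-contained, uses nothing beyond the definitions and the preceding lemma, and its forward direction does not even require $P$ to be nonempty, a degenerate case in which the top-element characterisation is delicate. Your parenthetical alternative for the forward direction, via lemma \ref{L;topComp} and $e^n((\top,\ldots,\top))=\top_{Q^n}$, is essentially half of the paper's argument, so in effect you have recorded both proofs.
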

\begin{proof}
$Q^n=\Qs$ if and only if $P$ has a top element $\top$ and $e(\top)$ is the top element of $Q$, so the result follows from lemma \ref{L;topComp}.
\end{proof}

\end{section}

\begin{section}{Meet-completions of isotone poset expansions}
\begin{Def}[\emph{poset expansion}]
A poset expansion is a structure $\mathcal P=(P, \leq, f_i:i\in I)$ such that $(P,\leq)$ is a poset and for each $i\in I$ there is $n_i\in \N$ with $f_i\colon P^{n_i}\to P$. Note that we define $0$ to be an element of $\N$. We say a poset expansion is \emph{isotone} when $f_i$ is isotone for all $i\in I$.
\end{Def}

If $e_1\colon P_1\to Q_1$ is a meet-completion, $e_2\colon P_2\to Q_2$ is any completion of $P_2$, and $f\colon P_1\to P_2$ is an isotone map, there is an intuitive method (introduced in \cite{Monk70}) for lifting $f$ to an isotone map $\hf\colon Q_1\to Q_2$, given by 
\begin{equation}\hf(q)=\bw\{e_2\circ f(p):e_1(p)\geq q\}\label{Eq;fhat}
\end{equation}
This lifting is illustrated by figure \ref{F;opliftEx} below.

\begin{figure}[ht]
\[\xymatrix{P_1\ar[d]_f\ar[r]^{e_1} & Q_1\ar[d]^{\hf}\\
P_2\ar[r]_{e_2}& Q_2 
}\]\caption{\label{F;opliftEx}An intuitive lift for an isotone map. }
\end{figure}

Applying this to the special case of the meet-completions $e\colon P \to Q$ and $e^n\colon P^n\to \Qs$, and an isotone map $f\colon P^n\to P$ we obtain:  
\begin{align*}\hf\colon \Qs&\to Q \\
q &\mapsto \bw\{e(f(p_1,..,p_n) : e^n(p_1,...,p_n)\geq q\}\end{align*}

The corresponding commuting square is shown in figure \ref{F;opliftEx2}.

\begin{figure}[ht]
\[\xymatrix{P^n\ar[d]_f\ar[r]^{e^n} & \Qs\ar[d]^{\hf}\\
P\ar[r]_{e}& Q 
}\]\caption{Lifting isotone operations to $\Qs$.}
\label{F;opliftEx2}
\end{figure}

We can extend $\hf$ to an order preserving map $\hfp\colon Q^n\to Q$ by defining:
\begin{align*}
\hfp \colon Q^n&\to Q\\
q&\mapsto \begin{cases} \hf(q) \text{   when }q\in\Qs\\ \top_{Q} \text{    otherwise}\end{cases}
\end{align*} 
Note that when $Q^n\neq \Qs$ we have $\hf(\top_{Q^n})=\hf(\top_{\Qs})=\top_{Q}$.
The situation can be summarized by the commuting diagram in figure \ref{F;oplift2} (here $\iota$ stands in both cases for the appropriate inclusion function). In this diagram the maps $\gamma$ and $\gammap$ are induced by the other maps. Lemma \ref{L;mapDef} gives an explicit definition for each.

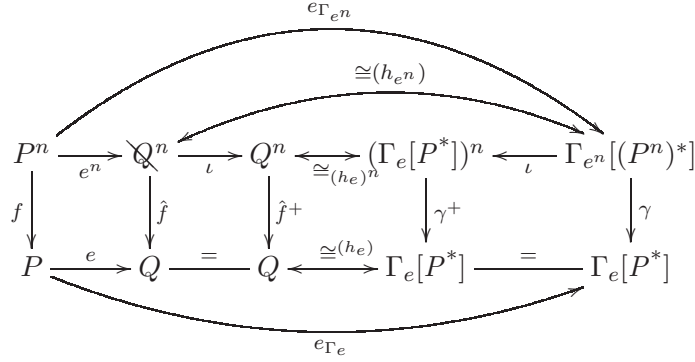
\begin{figure}[ht]
\[\xymatrix{P^n\ar@/^4pc/[rrrr]^{e_{\Gamma_{e^n}}}\ar[r]_{e^n}\ar[d]_f 
& \Qs\ar[d]^{\hf}\ar@{<->}@/^2pc/[rrr]^{\cong(h_{e^n})}\ar[r]_{\iota}
& Q^n \ar@{<->}[r]_{\cong_{(h_e)^n}} \ar[d]^{\hfp}
& (\Gamma_{e}[\Pc])^n\ar[d]^{\gammap}
&  \Gamma_{e^n}[(P^n)^*]\ar[d]^{\gamma}\ar[l]^\iota\\
P\ar[r]^{e}\ar@/_2pc/[rrrr]_{e_{\Gamma_e}} 
& Q\ar@{-}[r]^= 
& Q\ar@{<->}[r]^{\cong^{(h_e)}}
& \Gamma_{e}[\Pc]\ar@{-}[r]^=
& \Gamma_{e}[\Pc]
}\]\caption{\label{F;oplift2}Lifting isotone operations}
\end{figure}

\begin{Lem}\label{L;mapDef}
The maps $\gamma\colon \Gamma_{e^n}[(P^n)^*] \to \Gamma_{e}[\Pc]$ and $\gammap\colon (\Gamma_{e}[\Pc])^n\to \Gamma_{e}[\Pc]$ can be defined as follows:
\begin{enumerate}
\item $\gammap(C_1,...,C_n) = \begin{cases}\Gamma_e(f[C_1\times\ldots\times C_n]^\uparrow) \text{     when } C_i\neq\emptyset \text{ for all } i\in\{1,...,n\} \\ \Gamma_e(\emptyset) \text{     otherwise}\end{cases}$
\item $\gamma(C) = \begin{cases}\Gamma_e(f[C_1\times\ldots\times C_n]^\uparrow) \text{     when } C=(C_1,...,C_n)\neq\emptyset \\ \Gamma_e(\emptyset) \text{     otherwise}\end{cases}$
\end{enumerate}
\begin{proof}
First we check that the notation makes sense. Elements of $(P^n)^*$ are the up-closed subsets of $P^n$, so are either empty or of form $S_1\times\ldots\times S_n$ where the $S_i$ are non-empty up-closed subsets of $P$. Elements of $(\Pc)^n$ are of form $(S_1,...,S_n)$ where the $S_i$ are up-closed subsets of $P$ which may be empty. 
\\

The functions $\gamma$ and $\gammap$ are defined by composing maps from figure \ref{F;oplift2}. The relevant maps are as follows:
\begin{itemize}
\item[$(h_e)^n\colon$] $\begin{aligned}\\  Q^n &\to (\Gamma_e[P^*])^n\\
(q_1,...,q_n) &\mapsto (\{p\in P : e(p)\geq q_1\},...,\{p\in P : e(p)\geq q_n\})
\end{aligned}$

\item[$((h_e)^n)^{-1}\colon$] $\begin{aligned}\\  (\Gamma_e[P^*])^n &\to Q^n\\
(C_1,...,C_n)&\mapsto (\bw e[C_1],...,\bw e[C_n])
\end{aligned}$

\item[$\hfp\colon$] $\begin{aligned}\\ \\  Q^n &\to Q\\
(q_1,...,q_n)&\mapsto \begin{cases}
\bw\{e(f(p_1,...,p_n)):e(p_1,...,p_n)\geq (q_1,...,q_n)\} \text{ if }\neq\emptyset\\
\top_Q \text{ otherwise.}
\end{cases}
\end{aligned}$
\item[$h_e\colon$] $\begin{aligned}\\ Q &\to \Gamma_e[P^*]\\
q &\mapsto \{p\in P : e(p)\geq q\}
\end{aligned}$
\end{itemize}

\begin{enumerate}
\item 
$\gammap$ is defined to be $h_e\circ \hfp \circ ((h_e)^n)^{-1}$. We have two cases. Suppose first that $C_i \in \Gamma_e[P^*]$ is non-empty for all $i\in\{1,...,n\}$. Then 
\begin{align*}
(C_1,...,C_n)&\mapsto (\bw e[C_1],...,\bw e[C_2])\\
&\mapsto \bw\{e(f(p_1,...,p_n): p_i\in C_i\text{ for all }i)\}=\bw e[f[C_1,...,C_n]]\\
&\mapsto \{p\in P : e(p)\geq \bw e[f[C_1,...,C_n]]\}=\Gamma_e(f[C_1\times\ldots\times C_n]^\uparrow) 
\end{align*}
Alternatively, suppose $C_i=\emptyset$ for some $i$. Then
\begin{align*}
(C_1,...,C_n)&\mapsto (\bw e[C_1],...,\bw e[C_2])\\
&\mapsto \top_Q\\
&\mapsto \Gamma_e(\emptyset) 
\end{align*}
\item This follows from the fact that $\gamma$ is $\iota\circ\gammap$.
\end{enumerate} 
\end{proof}
\end{Lem}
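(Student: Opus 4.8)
The plan is to prove item 1 by explicitly composing the three maps whose composite defines $\gammap$, namely $\gammap = h_e \circ \hfp \circ ((h_e)^n)^{-1}$ read off from figure~\ref{F;oplift2}, and then to obtain item 2 by restricting that formula along the inclusion $\iota$. Before computing anything I would record the shape of the objects involved: an element of $(P^n)^*$ is an up-set of $P^n$, hence is either empty or a product $C_1 \times \cdots \times C_n$ of non-empty up-sets, whereas an element of $(\Pc)^n$ is a tuple $(C_1, \ldots, C_n)$ of up-sets any of which may be empty. This is exactly what licenses the two-way case split appearing in the statement.

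For item 1 I would take an arbitrary $(C_1, \ldots, C_n) \in (\Gamma_e[\Pc])^n$ and push it through the composite. In the principal case, where every $C_i$ is non-empty, applying $((h_e)^n)^{-1}$ yields $(\bw e[C_1], \ldots, \bw e[C_n])$; since the $C_i$ are non-empty this tuple lies in $\Qs$, so $\hfp$ agrees with $\hf$ and returns $\bw\{e(f(p_1, \ldots, p_n)) : e(p_i) \geq \bw e[C_i] \text{ for all } i\}$. The one genuinely substantive step is to replace the condition $e(p_i) \geq \bw e[C_i]$ by the condition $p_i \in C_i$: this holds because $\{p : e(p) \geq \bw e[C_i]\} = \Gamma_e(C_i) = C_i$, the last equality being closedness of $C_i$. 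The indexing set of the meet then becomes $C_1 \times \cdots \times C_n$, so the value is $\bw e[f[C_1 \times \cdots \times C_n]]$, and finally $h_e$ sends this to $\{p : e(p) \geq \bw e[f[C_1 \times \cdots \times C_n]]\}$, which is $\Gamma_e(f[C_1 \times \cdots \times C_n]^\uparrow)$ directly from the definition of $\Gamma_e$ together with the observation that passing to the up-closure leaves the meet of $e$-images unchanged.

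The delicate part, and where I expect the only real friction, is the edge case in which some $C_i$ is empty. Here $\bw e[C_i] = \top_Q$ (empty meet), so $((h_e)^n)^{-1}$ returns a tuple with a top coordinate, and I must square this with the convention that $\hfp$ sends the ``problem'' elements of $Q^n$ to $\top_Q$. The point to verify is that such a tuple really is a problem element: no $(p_1, \ldots, p_n)$ can satisfy $e^n(p_1, \ldots, p_n) \geq$ this tuple unless $P$ carries a top element mapped by $e$ to $\top_Q$, and in precisely that situation $\emptyset$ fails to be $\Gamma_e$-closed, since $\Gamma_e(\emptyset) = \{p : e(p) \geq \top_Q\}$ is then non-empty, so an empty coordinate cannot occur among closed sets at all. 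Hence the empty-coordinate case arises only when $\hfp$ legitimately returns $\top_Q$, whereupon $h_e(\top_Q) = \{p : e(p) \geq \top_Q\} = \Gamma_e(\emptyset)$ as claimed. Reconciling the $\hfp$ convention with this empty/top bookkeeping is the heart of the argument, though it is careful case analysis rather than a deep obstacle.

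Item 2 is then immediate: $\gamma$ is the restriction of $\gammap$ along the inclusion $\iota \colon \Gamma_{e^n}[(P^n)^*] \to (\Gamma_e[\Pc])^n$, and $\iota$ carries a non-empty closed set $C = C_1 \times \cdots \times C_n$ to the tuple $(C_1, \ldots, C_n)$ and the empty closed set to a tuple with empty coordinates, so the formula for $\gamma$ is read straight off the formula for $\gammap$.
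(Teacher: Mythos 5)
Your proposal is correct and takes essentially the same route as the paper: you compose $h_e\circ\hfp\circ((h_e)^n)^{-1}$ read off figure~\ref{F;oplift2}, run the same split into the all-coordinates-non-empty case and the empty-coordinate case, and obtain item 2 by restricting along the inclusion $\iota$. If anything, your handling of the edge case is more explicit than the paper's, which silently maps the tuple with a top coordinate to $\top_Q$; your observation that $\emptyset\in\Gamma_e[\Pc]$ forces $\{p\in P : e(p)\geq\top_Q\}=\emptyset$, so that this tuple really is a ``problem'' element on which $\hfp$ returns $\top_Q$ and $h_e(\top_Q)=\Gamma_e(\emptyset)$, is exactly the justification the paper omits.
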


\end{section}

\begin{section}{Preserving inequalities in meet-completions of isotone poset expansions}\label{S;InPreserve}

Given any standard closure operator $\Gamma\colon\Pc^\delta\to\Pc^\delta$ and $n$-ary function $f:P^n\rightarrow P$, there is a map $\gammap\colon(\Gamma[\Pc])^n\to\Gamma[\Pc]$ defined by
\begin{equation*}\gammap(C_1,...,C_n) =\Gamma(f[C_1\times\ldots\times C_n]^\uparrow)\end{equation*}
such that the diagram in figure \ref{F;newCom} commutes. Note that this $\gammap$ is the map that was introduced in lemma \ref{L;mapDef}. The cases in the original definition are redundant so long as we set $f[\emptyset]=\emptyset$.
A special case of this definition is when $f$ is a constant. For this case $\gammap =\Gamma(\set f^\uparrow)=\set f^\uparrow$.

\begin{figure}[ht]
\[\xymatrix{P^n\ar@{->}[r]^{(e_\Gamma)^n}\ar@{->}[d]_{f} & (\Gamma[\Pc])^n\ar@{->}[d]^{\gammap} \\
P\ar@{->}[r]_{e_\Gamma} & \Gamma[\Pc]
}\]\caption{\label{F;newCom}Lifting operations to meet-completions using closure operators}
\end{figure}

\begin{Def}[$\Gamma(\mathcal P^*)$]\label{def:bullet}
Let $\mathcal P=(P, \leq, f_i:i\in I)$  be a poset with isotone operations  $f_i\colon P^{n_i}\rightarrow P$ of arities $n_i$ (for $i\in I$) and let $\Gamma\colon \Pc^\delta\to\Pc^\delta$ be a standard closure operator.  We lift all the operations as in the preceding discussion to define
\begin{equation*}\Gamma(\mathcal P^*)=(\Gamma[P^*], \supseteq, \gammap_i:i\in I)\end{equation*}

\end{Def}
 We note that frequently inequalities that hold with respect to the operations of $\mP$ will fail in this completion. The remainder of this section is devoted to an examination of some conditions which guarantee inequality preservation.
 
\begin{Def}[$\Gamma_\iota$]
Define $\Gamma_\iota$ to be the identity on $\Pc^\delta$.
\end{Def}

\begin{Def}[$\mathscr{L}_{\mP}$]
If $\mP=(P,\leq, f_i:i\in I)$ is an isotone poset expansion then $\mathscr{L}_{\mP}$ is the formal language composed of the standard logical symbols of first order logic along with the signature $\{\leq\}\cup\{f_i:i\in I\}$ that corresponds to the operations of $\mP$ along with the binary relation $\leq$.
\end{Def}
Given an isotone poset expansion $\mP=(P,\leq, f_i:i\in I)$ we shall talk about terms in the language of $\mP$. These are the terms of the language $\mathscr{L}_{\mP}$ constructed as per the usual rules of first order term construction. For example, if $\mP$ has only the single binary operation $f$, then $f(x,y)$ would be a term, as would $f(x,x)$, and $f(f(x,y),z)$ etc. 

\begin{Lem}\label{L;sahl}
Let $\mP=(P,\leq, f_i:i\in I)$ be an isotone poset expansion, let $x_1,...,x_n$ be variables, and let $\phi(x_1,...,x_n)$ be a term in the language of $\mP$. Let $(C_1,...,C_n)\in (\Gamma_\iota(\mathcal P^*))^n$ and consider $\Gamma_\iota(\mathcal P^*)$ as a model for $\mathscr{L}_{\mP}$ by interpreting $f_i$ as $\gamma_i^+$ for all $i\in I$. Suppose we assign $x_i=C_i$ for all $i=1,...,n$. Then the interpretation of $\phi(x_1,...,x_n)$ in $\Gamma_\iota(\mathcal P^*)$ under this assignment is $\{\phi(y_1,...,y_n):(y_1,...,y_n)\in C_1\times...\times C_n\}^\uparrow=\phi[C_1\times...\times C_n]^\uparrow$.
\end{Lem}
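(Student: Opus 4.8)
The plan is to argue by induction on the structure of the term $\phi$, exploiting that for the identity closure operator $\Gamma_\iota$ the lifted operation carries no extra closure: since every $f_i[D_1\times\cdots\times D_{n_i}]^\uparrow$ is already an up-set, $\gammap_i(D_1,\ldots,D_{n_i})=\Gamma_\iota(f_i[D_1\times\cdots\times D_{n_i}]^\uparrow)=f_i[D_1\times\cdots\times D_{n_i}]^\uparrow$. Thus the interpretation of a compound term is obtained simply by applying $f_i$ pointwise to the interpretations of its immediate subterms and taking the up-closure in $P$.

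For the base case $\phi=x_j$ the interpretation is by definition $C_j$, and since each $C_i$ is an up-set the set $\{y_j:(y_1,\ldots,y_n)\in C_1\times\cdots\times C_n\}^\uparrow$ collapses to $C_j^\uparrow=C_j$ as well, the remaining coordinates serving only to witness that the product is inhabited; so the two sides agree.

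For the inductive step write $\phi=f_i(\psi_1,\ldots,\psi_{n_i})$ with each $\psi_k$ of strictly smaller complexity. By the induction hypothesis the interpretation of $\psi_k$ is $D_k:=\psi_k[C_1\times\cdots\times C_n]^\uparrow$, so by the description of $\gammap_i$ above the interpretation of $\phi$ is $f_i[D_1\times\cdots\times D_{n_i}]^\uparrow$, and it remains to identify this with $\phi[C_1\times\cdots\times C_n]^\uparrow=\{f_i(\psi_1(\vec y),\ldots,\psi_{n_i}(\vec y)):\vec y\in C_1\times\cdots\times C_n\}^\uparrow$. One inclusion is immediate: for a fixed tuple $\vec y\in C_1\times\cdots\times C_n$ each $\psi_k(\vec y)$ lies in $D_k$, so $f_i(\psi_1(\vec y),\ldots,\psi_{n_i}(\vec y))$ already belongs to $f_i[D_1\times\cdots\times D_{n_i}]$, whence $\phi[\ldots]^\uparrow\subseteq f_i[D_1\times\cdots\times D_{n_i}]^\uparrow$.

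The reverse inclusion is the step I expect to be the main obstacle. Here one takes an arbitrary $f_i(d_1,\ldots,d_{n_i})$ with $d_k\in D_k$, so that $d_k\geq\psi_k(\vec y^{(k)})$ for some witnessing tuple $\vec y^{(k)}\in C_1\times\cdots\times C_n$, and must produce a single tuple $\vec y^{\ast}\in C_1\times\cdots\times C_n$ satisfying $\psi_k(\vec y^{\ast})\leq d_k$ for every $k$ simultaneously; isotonicity of $f_i$ then gives $f_i(\psi_1(\vec y^{\ast}),\ldots,\psi_{n_i}(\vec y^{\ast}))\leq f_i(d_1,\ldots,d_{n_i})$, placing the latter in $\phi[\ldots]^\uparrow$. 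The delicate point is assembling $\vec y^{\ast}$ from the separate witnesses $\vec y^{(k)}$: one reads off each coordinate from the subterm that actually uses the corresponding variable, so that $\psi_k(\vec y^{\ast})=\psi_k(\vec y^{(k)})\leq d_k$ is preserved, using non-emptiness of the $C_i$ to fill any coordinate belonging to a variable that $\phi$ does not mention. This is precisely where the bookkeeping of which variables occur in which subterm, together with the standing non-emptiness of the $C_i$, must be handled with care, and where the isotonicity hypothesis on the $f_i$ is exactly what makes a single combined witness suffice.
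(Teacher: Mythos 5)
Your overall strategy is the same as the paper's: induct on the structure of $\phi$, observe that $\Gamma_\iota$ contributes no closure so that each $\gammap_i$ is just pointwise application followed by up-closure, get one inclusion for free, and prove the reverse inclusion by assembling the separate witnesses for the subterms into a single tuple and invoking isotonicity. (The only cosmetic difference is that your base case is a bare variable, while the paper starts from depth-one terms $f_i(x_1,\ldots,x_n)$ and illustrates the inductive step with $\phi(x_1,x_2,x_3)=f(\phi_1(x_1,x_2),\phi_2(x_3))$, declaring the general case ``similar''.) However, the step you yourself single out as the main obstacle is a genuine gap, not mere bookkeeping. Your prescription --- read off each coordinate of $\vec y^{\,*}$ from the subterm that actually uses the corresponding variable --- is only well defined when the immediate subterms $\psi_1,\ldots,\psi_{n_i}$ use pairwise disjoint sets of variables. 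If some variable occurs in two subterms $\psi_k$ and $\psi_l$, the witnesses $\vec y^{(k)}$ and $\vec y^{(l)}$ conflict on that coordinate, and in general no single choice works: an up-set need not be downward directed, so the two witnessing values need not have a common lower bound inside $C_i$.

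Moreover, no amount of care can repair this, because the statement is false for such terms (and the paper explicitly admits them as terms: ``$f(x,x)$ would be a term''). Take $P=\{0,a,b,1\}$ to be the diamond lattice ($0<a,b<1$ with $a,b$ incomparable) equipped with the single isotone operation $f(x,y)=x\wedge y$, let $\phi(x)=f(x,x)$, and let $C=\{a,b\}^\uparrow=\{a,b,1\}$. The interpretation of $\phi$ at $x=C$ is $\gammap_f(C,C)=f[C\times C]^\uparrow=\{0,a,b,1\}$, since $a\wedge b=0$, whereas $\phi[C]^\uparrow=\{c\wedge c:c\in C\}^\uparrow=\{a,b,1\}$. (The same example breaks proposition~\ref{P;sahl}: $\mP\models x\leq f(x,x)$ but $\mPs\not\models x\leq f(x,x)$.) So the lemma needs a linearity hypothesis --- each variable occurring at most once in $\phi$ --- under which your witness assembly, and the paper's, is sound; note that every later application via corollary~\ref{C;sahl} is to linear terms, so nothing downstream is affected. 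A second, smaller point: your appeal to a ``standing non-emptiness of the $C_i$'' has no basis in the statement, since $\Pc$ contains $\emptyset$; if $\phi$ omits the variable $x_i$ and $C_i=\emptyset$, the interpretation ignores $C_i$ while $\phi[C_1\times\ldots\times C_n]^\uparrow$ is empty, so the statement also fails in that corner case. To be fair, the paper's own proof has exactly the same defects --- it treats only the variable-disjoint case and dismisses the rest as a notational burden --- but because your write-up makes the assembly step explicit, it also makes the failure visible.
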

\begin{proof}
We induct on the construction of $\phi$. In the base case $\phi=f_i$ for some $i\in I$. In this case the interpretation of $f_i(x_1,...,x_n)$ is just $\gammap_i(C_1,...,C_n)$ and the result follows from the definitions of $\gammap_i$ and $\Gamma_\iota$.

For the inductive step we are interested in the case where $\phi(x_1,...,x_n)=f(\phi_1(\bar{x}_1),...,\phi_n(\bar{x}_n))$, where $f$ is an $n$-ary function from $\mathscr{L}_{\mP}$, $\bar{x}_i$ is a vector of variables from $\{x_1,...,x_n\}$, and $\phi_i(\bar{x}_i))$ is a term in $\mathscr{L}_{\mP}$ for each $i=1,...,n$. To illustrate the proof we will use the special case where $\phi(x_1,x_2,x_3)=f(\phi_1(x_1,x_2),\phi_2(x_3))$ for some $\phi_1$ and $\phi_2$. The general proof is similar but has a tedious notational burden so we choose to omit it. 

Now, by definition \begin{align*}
 \phi[C_1,C_2,C_3]^\uparrow &= \{\phi(x_1,x_2,x_3):x_i\in C_i \text{ for } i=1,2,3\}^\uparrow\\
\tag 1 &= \{f(\phi_1(x_1,x_2),\phi_2(x_3)):x_i\in C_i \text{ for } i=1,2,3\}^\uparrow
\end{align*}

Also, using the inductive hypothesis and the definitions of $\gammap$ and $\Gamma_\iota$ we have that the interpretation of $\phi(x_1,x_2,x_3)$ in $\Gamma_\iota(\mathcal P^*)$ is $f[\phi_1[C_1\times C_2]^\uparrow,\phi_2[C_3]^\uparrow]^\uparrow$.

Now,

\begin{align*}
\tag 2 f[\phi_1[C_1\times C_2]^\uparrow,\phi_2[C_3]^\uparrow]^\uparrow &= \{ f(a,b):a\in\phi_1[C_1\times C_2]^\uparrow,b\in\phi_2[C_3]^\uparrow\}^\uparrow \\
\end{align*}
where $a\geq \phi_1(c_1,c_2)$ for some $(c_1,c_2)\in C_1\times C_2$, and $b\geq \phi_2(c_3)$ for some $c_3\in C_3$. Clearly (1) $\subseteq$ (2), and that (2) $\subseteq$ (1) follows from the fact that $f$ is isotone.

\end{proof}

\begin{Def}[$\mPs$]
Given an isotone poset expansion $\mP=(P,\leq, f_i:i\in I)$ define $\mPs=\Gamma_\iota(\mPs)=(\Pc,\supseteq,\gammap_i:i\in I)$.
\end{Def}

\begin{Prop}\label{P;sahl}
Let $\mP=(P,\leq, f_i:i\in I)$ be an isotone poset expansion, let $\phi(x_1,...,x_n)$ and $\psi(x_1,...,x_n)$ be terms of $\mathscr{L}_{\mP}$. Then \begin{equation*}\mP\models\phi(x_1,...,x_n)\leq\psi(x_1,...,x_n)\iff \mPs\models\phi(x_1,...,x_n)\leq\psi(x_1,...,x_n)\end{equation*}
assuming $\mP$ and $\mPs$ are interpreted as $\mathscr{L}_{\mP}$-structures in the natural way.

\end{Prop}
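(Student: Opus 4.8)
The plan is to reduce everything to Lemma~\ref{L;sahl}, which already computes the interpretation of a term in $\mPs$. Fix an assignment $x_i = C_i$ with each $C_i\in\Pc$ an up-set. By Lemma~\ref{L;sahl} the value of $\phi$ under this assignment is $\phi[C_1\times\cdots\times C_n]^\uparrow$ and that of $\psi$ is $\psi[C_1\times\cdots\times C_n]^\uparrow$. Since the order on $\mPs$ is reverse inclusion, the inequality $\phi\leq\psi$ holds in $\mPs$ under this assignment if and only if
\[
\phi[C_1\times\cdots\times C_n]^\uparrow \;\supseteq\; \psi[C_1\times\cdots\times C_n]^\uparrow .
\]
So it suffices to relate this family of inclusions (as the $C_i$ range over up-sets) to the pointwise inequality $\phi(p_1,\dots,p_n)\leq\psi(p_1,\dots,p_n)$ in $\mP$, and I would treat the two implications separately.

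For the forward direction I would assume $\mP\models\phi\leq\psi$ and fix up-sets $C_1,\dots,C_n$. The required reverse inclusion $\psi[C_1\times\cdots\times C_n]^\uparrow\subseteq\phi[C_1\times\cdots\times C_n]^\uparrow$ follows by a routine membership chase: any $z$ in the left-hand side satisfies $z\geq\psi(c_1,\dots,c_n)$ for some $c_i\in C_i$; since $\phi(c_1,\dots,c_n)\leq\psi(c_1,\dots,c_n)$ holds in $\mP$ we get $z\geq\phi(c_1,\dots,c_n)$ with $\phi(c_1,\dots,c_n)\in\phi[C_1\times\cdots\times C_n]$, so $z$ lies in the right-hand side. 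As this holds for every assignment, $\mPs\models\phi\leq\psi$.

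For the converse I would specialise to principal up-sets. Taking $C_i=p_i^\uparrow$ and using that every term function of $\mP$ is isotone (an immediate induction, each $f_i$ being isotone), one checks the identity $\phi[p_1^\uparrow\times\cdots\times p_n^\uparrow]^\uparrow=(\phi(p_1,\dots,p_n))^\uparrow$, and likewise for $\psi$; here $\phi(p_1,\dots,p_n)$ lies in the set because $p_i\in p_i^\uparrow$, and the reverse inclusion is isotonicity. Thus the hypothesis $\mPs\models\phi\leq\psi$, instantiated at these $C_i$, reads $(\phi(p_1,\dots,p_n))^\uparrow\supseteq(\psi(p_1,\dots,p_n))^\uparrow$; combined with the elementary fact that $a^\uparrow\supseteq b^\uparrow\iff a\leq b$, this yields $\phi(p_1,\dots,p_n)\leq\psi(p_1,\dots,p_n)$ for all $p_1,\dots,p_n$, i.e. $\mP\models\phi\leq\psi$. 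Equivalently, the same identity shows that $\iota\colon p\mapsto p^\uparrow$ is an embedding of $\mathscr{L}_\mP$-structures, so the universal statement $\phi\leq\psi$ descends from $\mPs$ to its substructure $\iota[P]\cong\mP$.

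The whole argument is essentially bookkeeping once Lemma~\ref{L;sahl} is in hand; the only points needing care are keeping the reversed (inclusion) order straight and the isotonicity-based identity $\phi[p_1^\uparrow\times\cdots\times p_n^\uparrow]^\uparrow=(\phi(p_1,\dots,p_n))^\uparrow$ used in the converse. I expect no genuine obstacle beyond this, since the substantive content---that term interpretations in the completion are computed by applying the term pointwise and taking up-closure---has already been isolated in Lemma~\ref{L;sahl}.
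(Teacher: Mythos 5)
Your proof is correct and takes essentially the same route as the paper's: both reduce to Lemma~\ref{L;sahl}, prove the forward direction by the same membership chase (your version handles the empty $C_i$ case vacuously, where the paper splits it off as a separate trivial case), and prove the converse by instantiating at principal up-sets. If anything, your converse is slightly more careful than the paper's, which compresses the isotonicity-based identity $\phi[p_1^\uparrow\times\cdots\times p_n^\uparrow]^\uparrow=(\phi(p_1,\dots,p_n))^\uparrow$ into an unexplained final biconditional.
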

\begin{proof}
Using lemma \ref{L;sahl} and the definition of $\mPs$
\begin{align*}
&\phantom{\iff}\mPs\models\phi(x_1,...,x_n)\leq\psi(x_1,...,x_n)\\
&\iff \phi[C_1\times...\times C_n]^\uparrow \supseteq \psi[C_1\times...\times C_n]^\uparrow \text{ for all } C_1,...,C_n\in\mPs\\
&\implies \phi[x^\uparrow_1\times...\times x^\uparrow_n]^\uparrow \supseteq \psi[x^\uparrow_1\times...\times x^\uparrow_n]^\uparrow \text{ for all } x_1,...,x_n\in\mP\\
&\iff \mP\models\phi(x_1,...,x_n)\leq\psi(x_1,...,x_n)
\end{align*}
This proves the right to left implication. To prove the other direction let $C_1,...,C_n\in\mPs$. We must show that $\phi[C_1\times...\times C_n]^\uparrow \supseteq \psi[C_1\times...\times C_n]^\uparrow \text{ for all } C_1,...,C_n\in\mPs$ There are two cases. In the trivial case there is $i\in\{1,...,n\}$ with $C_i=\emptyset$. In this case $\psi[C_1\times...\times C_n]^\uparrow=\emptyset=\phi[C_1\times...\times C_n]^\uparrow$. Suppose instead that $C_i\neq\emptyset$ for all $i=1,...,n$ and let $a'\in \psi[C_1\times...\times C_n]^\uparrow$. Then $a'\geq a\in\psi[C_1\times...\times C_n]$ for some $a\in \mP$, and thus $a=\psi(a_1,...,a_n)$ for some $(a_1,...,a_n)\in\mP^n$. Now, by assumption $\phi(a_1,...,a_n)\leq\psi(a_1,...,a_n)$ and so $a'\in \phi[C_1\times...\times C_n]^\uparrow$ and we are done.  
\end{proof}
The following corollary to this result provides a condition on the relationship between a combination of operations of $\mP$ and a standard closure operator $\Gamma$ on $\Pc^\delta$ sufficient to guarantee the preservation of an inequality in the meet-completion induced by $\Gamma$. Somewhat surprisingly it turns out that only the `larger' term is important here.
\begin{Cor}\label{C;sahl}
Let $\mP=(P,\leq, f_i:i\in I)$ be an isotone poset expansion, let $\phi(x_1,...,x_n)$ and $\psi(x_1,...,x_n)$ be terms of $\mathscr{L}_{\mP}$, and let $\Gamma$ be a standard closure operator on $\Pc^\delta$. Define $\Gamma[\mP]=(\Gamma[\Pc],\supseteq,\gammap_i:i\in I)$ as in  definition \ref{def:bullet}. Suppose that $\psi(C_1,...,C_n)=\Gamma(\psi[C_1\times...\times C_n]^\uparrow)$ for all $(C_1,...,C_n)\in\Gamma[\Pc]^n$. Then
\begin{equation*}
\mP\models \phi(x_1,...,x_n)\leq\psi(x_1,...,x_n)\implies\Gamma[\mP]\models\phi(x_1,...,x_n)\leq\psi(x_1,...,x_n).
\end{equation*}
\end{Cor}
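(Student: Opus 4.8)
The plan is to compare, for every term, its interpretation in $\Gamma[\mP]$ against its interpretation in $\mPs$, which by Lemma~\ref{L;sahl} is just $\phi[C_1\times\ldots\times C_n]^\uparrow$. Write $\phi(\bar C)$ for the value of the term $\phi$ in $\Gamma[\mP]$ under the assignment $x_i=C_i$ (the notation used for $\psi$ in the statement). The first and main step is to establish the one-sided comparison
\[
\phi(\bar C)\supseteq \phi[C_1\times\ldots\times C_n]^\uparrow\qquad\text{for all }(C_1,\ldots,C_n)\in\Gamma[\Pc]^n
\]
by induction on the construction of $\phi$. The base case $\phi=f_i$ is immediate, since $\phi(\bar C)=\gammap_i(\bar C)=\Gamma(f_i[\bar C]^\uparrow)\supseteq f_i[\bar C]^\uparrow=\phi[\bar C]^\uparrow$ by the inflationary property of $\Gamma$ (condition 1 of Definition~\ref{D;closop}). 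For the inductive step $\phi=f(\psi_1,\ldots,\psi_k)$ I would set $E_j=\psi_j(\bar C)$ and $D_j=\psi_j[\bar C]^\uparrow$; the inductive hypothesis gives $E_j\supseteq D_j$, so monotonicity of $f$, of $^\uparrow$, and of $\Gamma$ combine with inflation to give $\phi(\bar C)=\Gamma(f[E_1\times\ldots\times E_k]^\uparrow)\supseteq f[D_1\times\ldots\times D_k]^\uparrow$, and the right-hand side equals $\phi[\bar C]^\uparrow$ by the same isotonicity argument used in Lemma~\ref{L;sahl}. The essential feature of this step is that it needs no hypothesis on $\phi$ whatsoever: it holds for every term, because $\gammap$ only ever enlarges the underlying set.

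Next I would feed in the assumed inequality. By Proposition~\ref{P;sahl}, $\mP\models\phi\leq\psi$ is equivalent to $\mPs\models\phi\leq\psi$, that is, $\phi[\bar C]^\uparrow\supseteq\psi[\bar C]^\uparrow$ for every choice of up-sets $\bar C$, and in particular for every $\bar C\in\Gamma[\Pc]^n$. Chaining this with the comparison from the first step yields, for all closed $\bar C$,
\[
\phi(\bar C)\supseteq \phi[\bar C]^\uparrow\supseteq\psi[\bar C]^\uparrow .
\]

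The final step is where the hypothesis on $\psi$ enters and reconciles the two halves. Since $\phi(\bar C)$ is an element of $\Gamma[\Pc]$ it is $\Gamma$-closed, and $\Gamma(\psi[\bar C]^\uparrow)$ is the least $\Gamma$-closed set containing $\psi[\bar C]^\uparrow$ — a standard consequence of Definition~\ref{D;closop}, since any closed $T\supseteq \psi[\bar C]^\uparrow$ satisfies $T=\Gamma(T)\supseteq\Gamma(\psi[\bar C]^\uparrow)$. As $\phi(\bar C)$ is a closed set containing $\psi[\bar C]^\uparrow$, it therefore contains $\Gamma(\psi[\bar C]^\uparrow)$, which by the hypothesis equals $\psi(\bar C)$. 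Hence $\phi(\bar C)\supseteq\psi(\bar C)$, i.e. $\phi(\bar C)\leq\psi(\bar C)$ in the reverse-inclusion order of $\Gamma[\mP]$, giving $\Gamma[\mP]\models\phi\leq\psi$. The one genuinely delicate point — and precisely the reason only the larger term matters — is this asymmetry: for $\phi$ the free inflation bound $\phi(\bar C)\supseteq\phi[\bar C]^\uparrow$ suffices, whereas for $\psi$ we must replace $\psi[\bar C]^\uparrow$ by its closure, and the hypothesis $\psi(\bar C)=\Gamma(\psi[\bar C]^\uparrow)$ together with the closedness of $\phi(\bar C)$ is exactly what licenses that replacement.
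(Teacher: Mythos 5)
Your proof is correct and takes essentially the same route as the paper: apply Proposition \ref{P;sahl} to get $\psi[C_1\times\ldots\times C_n]^\uparrow\subseteq\phi[C_1\times\ldots\times C_n]^\uparrow$, then use the closure hypothesis on $\psi$ together with closedness/monotonicity of $\Gamma$ to conclude $\psi(C_1,\ldots,C_n)\subseteq\phi(C_1,\ldots,C_n)$. The only difference is one of detail: the inclusion $\phi[C_1\times\ldots\times C_n]^\uparrow\subseteq\phi(C_1,\ldots,C_n)$ (equivalently $\Gamma(\phi[C_1\times\ldots\times C_n]^\uparrow)\subseteq\phi(C_1,\ldots,C_n)$), which the paper simply asserts with ``Note that \ldots we must have'', is something you actually establish by induction on the term $\phi$ --- a worthwhile addition, and your observation that only the trivial direction of Lemma \ref{L;sahl} is needed there makes the step robust.
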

\begin{proof}
By proposition \ref{P;sahl} we have $\mPs\models\phi(x_1,...,x_n)\leq\psi(x_1,...,x_n)$, so in particular $\psi[C_1\times...\times C_n]^\uparrow\subseteq \phi[C_1\times...\times C_n]^\uparrow$ for all $(C_1,...,C_n)\in\Gamma[\Pc]^n$. Note that in $\Gamma(\mP)$ we must have $\Gamma(\phi[C_1\times...\times C_n]^\uparrow)\subseteq \phi(C_1,...,C_n)$, and similar for $\psi$. So 
\begin{equation*}\Gamma(\psi[C_1\times...\times C_n]^\uparrow)\subseteq\Gamma(\phi[C_1\times...\times C_n]^\uparrow)\subseteq\phi(C_1,...,C_n)
\end{equation*} 
 and since by assumption $\psi(C_1,...,C_n)=\Gamma(\psi[C_1\times...\times C_n]^\uparrow)$ this gives $\psi(C_1,...,C_n)\subseteq\phi(C_1,...,C_n)$, and thus $\Gamma[\mP]\models\phi(x_1,...,x_n)\leq\psi(x_1,...,x_n)$ as required.
\end{proof}

\begin{Cor}\label{C;sahl2}
With all notation as in corollary \ref{C;sahl} suppose $\mP\models\phi(x_1,...,x_n)=\psi(x_1,...,x_n)$. Then if 
 \begin{enumerate}
\item $\psi(C_1,...,C_n)=\Gamma(\psi[C_1\times...\times C_n]^\uparrow)$, and 
\item $\phi(C_1,...,C_n)=\Gamma(\phi[C_1\times...\times C_n]^\uparrow)$  
\end{enumerate}
in $\Gamma[\mP]$ for all  $(C_1,...,C_n)\in\Gamma[\Pc]^n$, then 
\begin{equation*}
\Gamma[\mP]\models\phi(x_1,...,x_n)=\psi(x_1,...,x_n)
\end{equation*}
\end{Cor}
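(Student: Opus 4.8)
The plan is to reduce this equational statement to two applications of corollary~\ref{C;sahl}, one for each direction of the equality. Since $\mP\models\phi(x_1,...,x_n)=\psi(x_1,...,x_n)$ is equivalent to the conjunction of the two inequalities $\mP\models\phi(x_1,...,x_n)\leq\psi(x_1,...,x_n)$ and $\mP\models\psi(x_1,...,x_n)\leq\phi(x_1,...,x_n)$, and since the order on $\Gamma[\mP]$ is the (antisymmetric) lattice order $\supseteq$, it suffices to establish both inequalities in $\Gamma[\mP]$ and then invoke antisymmetry.

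First I would treat the inequality $\mP\models\phi\leq\psi$. Here $\psi$ plays the role of the `larger' term, so the hypothesis required by corollary~\ref{C;sahl} is precisely that $\psi(C_1,...,C_n)=\Gamma(\psi[C_1\times...\times C_n]^\uparrow)$ for all $(C_1,...,C_n)\in\Gamma[\Pc]^n$, which is exactly condition~(1). Applying corollary~\ref{C;sahl} then yields $\Gamma[\mP]\models\phi(x_1,...,x_n)\leq\psi(x_1,...,x_n)$.

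Next I would treat the reverse inequality $\mP\models\psi\leq\phi$. Now the roles are swapped: $\phi$ is the `larger' term, so the relevant hypothesis of corollary~\ref{C;sahl} is $\phi(C_1,...,C_n)=\Gamma(\phi[C_1\times...\times C_n]^\uparrow)$ for all $(C_1,...,C_n)\in\Gamma[\Pc]^n$, which is exactly condition~(2). A second application of corollary~\ref{C;sahl} gives $\Gamma[\mP]\models\psi(x_1,...,x_n)\leq\phi(x_1,...,x_n)$.

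Combining the two conclusions, for every $(C_1,...,C_n)\in\Gamma[\Pc]^n$ we have both $\phi(C_1,...,C_n)\leq\psi(C_1,...,C_n)$ and $\psi(C_1,...,C_n)\leq\phi(C_1,...,C_n)$ in $\Gamma[\mP]$; since $\leq$ is a partial order these force $\phi(C_1,...,C_n)=\psi(C_1,...,C_n)$, establishing $\Gamma[\mP]\models\phi(x_1,...,x_n)=\psi(x_1,...,x_n)$. There is essentially no obstacle here: the content is entirely carried by corollary~\ref{C;sahl}, and the only thing to be careful about is bookkeeping, namely that each of conditions~(1) and~(2) is used for the correct direction (each is exactly the closure hypothesis needed when its term appears on the right-hand side of the inequality), so neither condition is redundant.
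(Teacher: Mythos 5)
Your proof is correct and is precisely the argument the paper intends: the paper's own proof is the one-liner ``Immediate, from corollary~\ref{C;sahl}'', which unpacks to exactly your two applications of that corollary (condition~(1) when $\psi$ is the right-hand term, condition~(2) when $\phi$ is) followed by antisymmetry of the order on $\Gamma[\mP]$. Nothing is missing and the bookkeeping of which condition serves which direction is exactly right.
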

\begin{proof}
Immediate, from corollary \ref{C;sahl}.\end{proof}
\end{section}

\begin{section}{Ordered domain algebras}\label{S;reps}
The axioms in this section originate with Bredikhin, and the presentation here is that used in \cite{HirMik13}.
\begin{Def}\label{def:rda}
The class $\mathbf{R}(\comp,\dom,\ran,{}^\conv,0,\ide,\leq)$ is defined as the isomorphs of
${\A}=(A,\comp,\dom, \ran, {}^\conv, \emptyset, \id, \subseteq)$ where $A\subseteq\wp(U\times U)$
for some base set $U$ and
\begin{align*}
x\comp y&=\{(u, v)\in U\times U: (u, w)\in x\text{ and }
(w,v)\in y\text{ for some }w\in U\}\\
\dom (x)&=\{(u, u)\in U\times U: (u, v)\in x \text{ for some }v\in U\}\\
\ran (x)&=\{(v, v)\in U\times U:(u, v)\in x \text{ for some }u\in U\}\\
x^\conv &=\{(v, u)\in U\times U:(u, v)\in x\}\\
\ide & =\{(u, v)\in U\times U:u=v\}
\end{align*}
for every $x,y\in A$.
\end{Def}

Let {$\mathbf{Ax}$}\ denote the following formulas:
\begin{description}
\item[Partial order]
$\leq$ is reflexive, transitive and antisymmetric, with lower bound $0$.
\item[Isotonicity and normality]
the operators ${}^\smile, \comp, \dom, \ran$ are isotonic,
e.g.\  $a\leq b\to a\comp c\leq b\comp c$ etc.\  and normal
$0^\smile=0\comp a=a\comp 0=\dom(0)=\ran(0)=0$.
%\item[negation] $--a=a$ and $(a\leq b)\to (-b\leq -a)$.
\item[Involuted monoid]
$\comp$ is associative, $\ide$ is left and right identity for $\comp$,
$\ide^\conv=\ide$ and
${}^\smile$ is an involution:
$(a^\conv)^\conv=a,\  (a\comp b)^\conv=b^\conv\comp a^\conv$.
\item[Domain/range axioms]
\mbox{}
\begin{enumerate}
\labitem{(D1)}{D1}$\dom(a)=(\dom(a))^\smile\leq \ide=\dom(\ide)$
\labitem{(D2)}{D2}$\dom(a)\leq a\comp a^\conv$
\labitem{(D3)}{D3}$\dom(a^\conv)=\ran(a)$
\labitem{(D4)}{D4}$\dom(\dom(a))=\dom(a)=\ran(\dom(a))$
\labitem{(D5)}{D5}$\dom(a)\comp a=a$
\labitem{(D6)}{D6}$\dom(a\comp b)=\dom(a\comp\dom(b))$
%(D6)&\dom(x;y)&\leq&\dom(x)\\
% (z\leq\dom(x)\wedge z\leq\dom(y))&\Rightarrow&
%(z\leq\dom(x);\dom(y))\label{D7}
%(D8)&(\ran(x)\leq\dom(y)&\Rightarrow& \dom(x)=\dom(x;y)
%\dom(x)\comp y\leq y, \; y\comp \dom(x)\leq y\\
%\dom(x)\comp\dom(y)&=&\dom(y)\comp\dom(x)\\
\labitem{(D7)}{D7}$\dom(\dom(a)\comp\dom(b))=\dom(a)\comp\dom(b)=\dom(b)\comp\dom(a)$
\end{enumerate}
Two consequences of these axioms (use \ref{D6}, \ref{D7} for the first, use \ref{D4}, \ref{D5} for the second) are
%A consequence of \ref{D6} and \ref{D7} is
%\labitem{(D8)}{D8}$\dom(\dom(a)\comp b)=\dom(a)\comp\dom(b)$
%=\dom(b)\comp\dom(a)

%\end{enumerate}
%A consequence of axioms \ref{D4} and \ref{D5} is
\begin{enumerate}
\labitem{(D8)}{D8}$\dom(\dom(a)\comp b)=\dom(a)\comp\dom(b)$
\labitem{(D9)}{D9}$\dom(a)\comp\dom(a)=\dom(a)$
%\dom(a;b)\leq\dom(a)\label{D10}
\end{enumerate}
\end{description}
A model of these axioms is called an \emph{ordered domain algebra (ODA)}.

Each of the axioms \ref{D1}--\ref{D8} has a dual axiom,
obtained by swapping domain and range and reversing the order of compositions,
and we denote the dual axiom by a $\partial$ superscript, thus for example,
$\ref{D6}^\partial$ is $\ran(b\comp a)=\ran(\ran(b)\comp a)$.
The dual axioms can be obtained from the axioms above,
using the involution axioms and \ref{D3}.

Another consequence of the ODA axioms is the following lemma, which we shall use later.

\begin{Lem}\label{lem:dab}
Let $\mathcal B$ be any ODA and let $b, c\in \mathcal B$.  Then
\begin{align*}
\dom(b\comp c)\comp b \geq b\comp \dom(c)\\
\intertext{and}
b\comp\ran(c\comp b)\geq \ran(c);b
\end{align*}
\end{Lem}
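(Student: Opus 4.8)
The plan is to prove both inequalities from the ODA axioms by near-identical manipulations, deriving the second from the first by taking converses. For the first inequality, $\dom(b\comp c)\comp b\geq b\comp\dom(c)$, the key idea is to recognise the right-hand side $b\comp\dom(c)$ as already being of the form $\dom(x)\comp x$ (after normalising the domain via \ref{D6}), so that \ref{D5} collapses it, leaving an inequality that follows by discarding a sub-identity factor.

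Concretely, I would first use \ref{D6} (with the roles of its two arguments played by $b$ and $c$) to rewrite $\dom(b\comp c)=\dom(b\comp\dom(c))$. Applying \ref{D5} to the element $b\comp\dom(c)$ then gives $\dom(b\comp\dom(c))\comp(b\comp\dom(c))=b\comp\dom(c)$, so by associativity $\dom(b\comp c)\comp b\comp\dom(c)=b\comp\dom(c)$. Since $\dom(c)\leq\ide$ by \ref{D1}, isotonicity of $\comp$ together with $\ide$ being a right identity yields $b\comp\dom(c)=\dom(b\comp c)\comp b\comp\dom(c)\leq\dom(b\comp c)\comp b\comp\ide=\dom(b\comp c)\comp b$, which is the claim.

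For the second inequality I would exploit the structural symmetry rather than repeat the computation. Instantiating the first inequality at $b^\conv$ and $c^\conv$ gives $b^\conv\comp\dom(c^\conv)\leq\dom(b^\conv\comp c^\conv)\comp b^\conv$. Applying ${}^\conv$ to both sides (it is isotone and an involution, hence preserves $\leq$) and using $(x\comp y)^\conv=y^\conv\comp x^\conv$, the self-converseness of domains from \ref{D1}, and \ref{D3} in the forms $\dom(c^\conv)=\ran(c)$ and $\dom(b^\conv\comp c^\conv)=\dom((c\comp b)^\conv)=\ran(c\comp b)$, the left side becomes $\ran(c)\comp b$ and the right side becomes $b\comp\ran(c\comp b)$. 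This gives exactly $\ran(c)\comp b\leq b\comp\ran(c\comp b)$. Equivalently, one could run the mirror-image argument directly using the $\partial$-dual axioms $\ref{D6}^\partial$, $\ref{D5}^\partial$ and $\ref{D1}^\partial$.

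I do not expect a genuine obstacle here, since each step invokes a single axiom. The one point needing care is the initial normalisation: it is \ref{D6} that lets us replace $\dom(b\comp c)$ by $\dom(b\comp\dom(c))$ so that \ref{D5} reproduces $b\comp\dom(c)$ on the nose; without this rewriting \ref{D5} does not apply directly. Once that identity is secured, both inequalities reduce to absorbing the sub-identity factors $\dom(c),\ran(c)\leq\ide$.
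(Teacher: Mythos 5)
Your proof is correct and follows essentially the same route as the paper: rewrite $\dom(b\comp c)$ as $\dom(b\comp\dom(c))$ via \ref{D6}, collapse with \ref{D5}, and absorb $\dom(c)\leq\ide$ using \ref{D1} and isotonicity (the paper applies these last two steps in the opposite order, which is immaterial). Your explicit converse-based derivation of the second inequality is just a spelled-out version of the paper's ``the other part is similar,'' since the dual axioms are themselves obtained by exactly that involution-plus-\ref{D3} argument.
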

\begin{proof}
\begin{align*}
\dom(b\comp c)\comp b&=\dom(b\comp\dom(c))\comp b&\text{by \ref{D6}}\\
&\geq\dom(b\comp\dom(c))\comp b\comp\dom(c)&\mbox{\ref{D1}}\\
&=b\comp\dom(c)&\mbox{\ref{D5}}
\end{align*}
The other part is similar.
\end{proof}

\end{section}

\begin{section}{A completion}\label{S;ODAcomp}
\begin{Def}[$\G$]\label{D;closed}
Given an ODA $A$ with underlying poset $P$, define $\G\colon\Pc^\delta\to\Pc^\delta$ by defining the closed sets of $\Pc$ to be those $X\in\Pc$ such that $\{\dom(x)\comp y\comp \ran(z):x,y,z\in X\}^\uparrow=X$.
\end{Def}

\begin{Lem}
$\G$ is a standard closure operator on $\Pc^\delta$.
\end{Lem}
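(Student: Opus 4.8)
The plan is to read Definition~\ref{D;closed} in the only way that makes it meaningful: it specifies a family of \emph{closed sets} $\mathcal C\subseteq\Pc$, and $\G$ is the induced operator sending each $X\in\Pc^\delta$ to the least closed set containing it. Concretely, write $\Phi\colon\Pc^\delta\to\Pc^\delta$ for the one-step map $\Phi(X)=\{\dom(x)\comp y\comp\ran(z):x,y,z\in X\}^\uparrow$, so that the closed sets are exactly the fixed points of $\Phi$, and set $\G(X)=\bw\{C:\Phi(C)=C,\ X\subseteq C\}$. Since $\Pc^\delta$ is ordered by inclusion, this meet is just $\bigcap\{C:\Phi(C)=C,\ X\subseteq C\}$. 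The whole lemma then rests on the well-known fact that a subfamily of a complete lattice is the family of fixed points of a (unique) closure operator precisely when it is closed under arbitrary meets, in which case the induced ``least closed set above'' operator \emph{is} that closure operator.

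The key step, and the only genuinely structural one, is to show that $\Phi$ is monotone and \emph{inflationary}. Monotonicity is immediate, since enlarging $X$ enlarges the generating set. For inflationarity I would take any $p\in X$ and observe that, with $x=y=z=p$, associativity together with \ref{D5} and its dual $\ref{D5}^\partial$ gives $\dom(p)\comp p\comp\ran(p)=(\dom(p)\comp p)\comp\ran(p)=p\comp\ran(p)=p$, so $p$ is itself a generator and hence $p\in\Phi(X)$; thus $X\subseteq\Phi(X)$. Granting these two facts, I would invoke (or prove in two lines) the general lattice fact that the fixed points of a monotone inflationary self-map of a complete lattice are closed under arbitrary meets: if $\Phi(C_i)=C_i$ for all $i$ and $C=\bigcap_i C_i$, then $C\subseteq\Phi(C)$ by inflationarity while $\Phi(C)\subseteq\Phi(C_i)=C_i$ for each $i$ by monotonicity, whence $\Phi(C)\subseteq C$ and $C$ is closed. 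Consequently $\G(X)$, being an intersection of closed sets, is itself closed. The three closure axioms of Definition~\ref{D;closop} now follow formally: $X\subseteq\G(X)$ and monotonicity of $\G$ are trivial from the definition as an intersection, and idempotence holds because $\G(X)$ is closed and a closed set is its own least closed superset.

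For standardness it remains to check $\G(\pu)=\pu$, i.e.\ that $\pu$ is a fixed point of $\Phi$. The inclusion $\pu\subseteq\Phi(\pu)$ is inflationarity. For the reverse inclusion I would show each generator lies in $\pu$: if $x,y,z\geq p$ then isotonicity of $\dom$, $\ran$ and $\comp$ yields $\dom(x)\comp y\comp\ran(z)\geq\dom(p)\comp p\comp\ran(p)=p$, so the generating set is contained in the up-set $\pu$ and therefore $\Phi(\pu)\subseteq\pu$.

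I expect the main obstacle to be one of framing rather than computation: one must resist trying to prove that the one-step map $\Phi$ is itself idempotent (which need not hold and is not what the definition asserts), and instead recognise that $\G$ is the closure operator \emph{determined by} the fixed points of $\Phi$. Once this is understood, idempotence of $\G$ comes for free from meet-closure of those fixed points, and that meet-closure needs nothing beyond monotonicity and inflationarity of $\Phi$. The only ODA-specific inputs are the identity $\dom(p)\comp p\comp\ran(p)=p$ (from \ref{D5}, $\ref{D5}^\partial$ and associativity) and the isotonicity of the operators; one should also keep the order of $\Pc^\delta$ straight, so that meets are intersections and $\G(X)$ is the \emph{smallest} closed set containing $X$.
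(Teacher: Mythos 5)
Your proof is correct, and since the paper's own proof of this lemma is literally just ``Routine,'' there is no detailed argument for it to diverge from. Your reading of Definition~\ref{D;closed} --- closed sets as the fixed points of the one-step map $\Phi(X)=\{\dom(x)\comp y\comp\ran(z):x,y,z\in X\}^\uparrow$, with $\G(X)$ the least closed set containing $X$ --- is the intended one, and all your steps check out: $\Phi$ is inflationary because $\dom(p)\comp p\comp\ran(p)=p$ by \ref{D5}, its dual and associativity in $\A$ (note the computation takes place in $\A$, where associativity is available, even though it later fails in the completion); fixed points of a monotone inflationary self-map are closed under arbitrary intersections, and the full carrier is itself a fixed point, so the defining intersection is never over an empty family; and standardness follows from isotonicity of $\dom$, $\ran$ and $\comp$ exactly as you say. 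The one point of comparison worth recording is that the paper, in Lemma~\ref{L;Gconstruction}, realizes $\G(X)$ \emph{from below} as $\bigcup_{n\in\omega}X_n$ with $X_{n+1}=\Phi(X_n)$, whereas you realize it \emph{from above} as an intersection of $\Phi$-fixed sets; the two descriptions agree (your inflationarity observation is precisely what makes the chain $X_n\subseteq X_{n+1}$ increase in the paper's version), your version avoids the $\omega$-iteration entirely, and the paper's version supplies the explicit generation process that is then exploited in the proof of Lemma~\ref{L;invo}. Your closing caution is also well taken: $\Phi$ itself need not be idempotent, which is exactly why the paper iterates it $\omega$ times, and why idempotence of $\G$ must come from the meet-closure (or union-of-chain) structure rather than from a single application of $\Phi$.
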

\begin{proof}
Routine.
\end{proof}

\begin{Lem}\label{L;Gconstruction}
Given $X\in\Pc$, if we define 
\begin{align*}&X_0=X, and \\ 
&X_{n+1}=\{\dom(x)\comp y\comp\ran(z):x,y,z\in X_n\}^\uparrow \text{ for all } n\in\omega 
\end{align*}
then $\G(X)=\bigcup_\omega X_n$.
\end{Lem}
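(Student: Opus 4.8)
The plan is to set $W=\bigcup_{n\in\omega}X_n$ and to show $W=\G(X)$ by checking that $W$ is the least $\G$-closed up-set containing $X$. Throughout, write $F(Y)=\{\dom(x)\comp y\comp\ran(z):x,y,z\in Y\}^\uparrow$, so that $X_{n+1}=F(X_n)$ and ``$Y$ is closed'' means $F(Y)=Y$. Since the preceding lemma guarantees $\G$ is a (standard) closure operator on $\Pc^\delta$, and the closed sets are exactly the fixed points of $F$, the element $\G(X)$ is by definition the least closed set above $X$ in $\Pc^\delta$, i.e.\ the smallest up-set $Y\supseteq X$ with $F(Y)=Y$. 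It therefore suffices to verify: (i) $W$ is an up-set; (ii) $X\subseteq W$; (iii) $F(W)=W$; and (iv) $W\subseteq Y$ for every closed up-set $Y\supseteq X$.

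The observation underlying almost every step is that $\dom(w)\comp w\comp\ran(w)=w$ for every $w\in P$: by \ref{D5} we have $\dom(w)\comp w=w$, and by its dual $\ref{D5}^\partial$ we have $w\comp\ran(w)=w$, so $\dom(w)\comp w\comp\ran(w)=w\comp\ran(w)=w$. Taking $x=y=z=w$ this shows $w\in F(Y)$ whenever $w\in Y$, giving $Y\subseteq F(Y)$ for any $Y$; in particular $X_n\subseteq X_{n+1}$ for each $n$, so the sequence $(X_n)$ is increasing and $W=\bigcup_n X_n$ is a directed union. Each $X_n$ is an up-set ($X_0=X$ by hypothesis, and each $X_{n+1}$ is visibly of the form $(\cdot)^\uparrow$), hence so is $W$, giving (i); and (ii) is immediate from $X=X_0$.

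For (iii), the inclusion $W\subseteq F(W)$ is the special case $Y=W$ of the observation above. For the reverse inclusion, take any $x,y,z\in W$; since $(X_n)$ is increasing, all three lie in a common $X_n$ (the largest of their three indices), whence $\dom(x)\comp y\comp\ran(z)\in F(X_n)=X_{n+1}\subseteq W$. Thus the generating set $\{\dom(x)\comp y\comp\ran(z):x,y,z\in W\}$ is contained in the up-set $W$, so its up-closure $F(W)$ is contained in $W$ as well; hence $F(W)=W$. For (iv), note that $F$ is monotone with respect to inclusion, so if $Y$ is closed with $X\subseteq Y$ then a routine induction gives $X_n\subseteq Y$ for all $n$: the base case is $X_0=X\subseteq Y$, and $X_{n+1}=F(X_n)\subseteq F(Y)=Y$. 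Therefore $W\subseteq Y$. Combining (i)--(iv), $W$ is the least closed up-set containing $X$, i.e.\ $W=\G(X)$.

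I do not expect a serious obstacle: the only point requiring genuine use of the ODA axioms is the identity $\dom(w)\comp w\comp\ran(w)=w$, which simultaneously delivers the monotonicity of the chain $(X_n)$ (so that finitely many elements of $W$ always share a common $X_n$) and the inclusion $W\subseteq F(W)$. The rest is the standard fact that iterating the defining operation along $\omega$ and taking the union reaches the least fixed point, together with the easy bookkeeping that the up-set property is preserved at each stage.
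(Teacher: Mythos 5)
Your proof is correct and follows essentially the same route as the paper's: show the chain $(X_n)$ is increasing, deduce that the union is $\G$-closed via the common-index argument, and conclude by minimality. The only difference is that you spell out the step the paper calls ``easy'' (that $X_n\subseteq X_{n+1}$, via $\dom(w)\comp w\comp\ran(w)=w$ from \ref{D5} and its dual) and explicitly verify the up-set bookkeeping.
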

\begin{proof}
It's easy to show that $X_n\subseteq X_{n+1}$ for all $n\in\omega$, so given $x,y,z\in \bigcup_\omega X_n$ there is a $k\in\omega$ with $x,y,z\in X_k$. Thus $(\dom(x)\comp y\comp \ran(z))^\uparrow\subseteq X_{k+1}\subseteq \bigcup_\omega X_n$, hence $\bigcup_\omega X_n$ is $\G$-closed. Clearly any closed set containing $X$ must contain $\bigcup_\omega X_n$, so we must have $\G(X)=\bigcup_\omega X_n$ as required.  
\end{proof}

The next definition is a special case of definition~\ref{def:bullet}.
\begin{Def}[\text{$\G[\A]$}]\label{def:closed2}
Given an ODA $\A$ with underlying poset $P$, we define $\G[\A]=(\G[\Pc],\supseteq,\gammap_i:f_i\in\{\comp,\dom,\ran,{}^\conv,0,\id\})$. For clarity we will write $\compC$, $\domC$, $\ranC$, ${}^{\convC}$, $\oC$, $\idC$ for the operations $\gammap_i$.
\end{Def}

\begin{Thm}[Hirsch, Mikulas]\label{thm:rep}
Let $\A$ be an ordered domain algebra.  The map $h:\A\rightarrow \wp(\G[\A]\times\G[\A])$ defined by
\begin{equation*}
(X,Y)\in h(a)\iff X\compC\au\subseteq Y \text{ and } Y\compC (a^\conv)^\uparrow \subseteq X 
\end{equation*}
 is a representation of $\A$ over the base $\G[\A]$.
\end{Thm}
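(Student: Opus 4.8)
The plan is to verify that the map $h$ defined in Theorem~\ref{thm:rep} satisfies the two defining conditions of a representation in the class $\mathbf{R}(\comp,\dom,\ran,{}^\conv,0,\ide,\leq)$ from Definition~\ref{def:rda}: first that $h$ is a homomorphism, i.e.\ it commutes with each of the operations $\comp,\dom,\ran,{}^\conv,0,\ide$ and respects $\leq$ (interpreting the latter as $\subseteq$ on the concrete side), and second that $h$ is injective (an embedding). Since the base is $\G[\A]$, each concrete operation is the standard set-theoretic operation on pairs of closed sets, so the bulk of the work is a series of biconditional chases showing, for each operation, that a pair $(X,Y)$ lies in $h$ of the combined term if and only if the witnessing intermediate closed set can be produced. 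I would set up notation for the concrete operations on $\wp(\G[\A]\times\G[\A])$ once, then dispatch the operations one at a time.

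\emph{First} I would handle the order and the Boolean-free structure: show $a\leq b\implies h(a)\subseteq h(b)$ directly from the definition (if $(X,Y)\in h(a)$ then $X\compC\au\subseteq X\compC\, b^\uparrow\subseteq Y$ and similarly on the other side, using $a\le b\Rightarrow a^\uparrow\supseteq b^\uparrow$ and isotonicity of $\compC$), and check $h(0)=\emptyset$. \emph{Next}, the identity and converse: for $\ide$ one shows $(X,Y)\in h(\ide)$ forces $X=Y$ and conversely, using $\ide^\uparrow$ being the $\compC$-unit and the closure structure; for ${}^\conv$ the symmetry of the two defining clauses under swapping $X\leftrightarrow Y$ and $a\leftrightarrow a^\conv$ should give $h(a^\conv)=h(a)^\conv$ almost formally, invoking the involution axioms and \ref{D3}. \emph{Then} the domain and range operators: here I expect to use that $\domC$ on closed sets mirrors $\dom$ on $\A$, together with axioms \ref{D1}, \ref{D4}, \ref{D5}, and the key Lemma~\ref{lem:dab}, to verify $h(\dom(a))$ equals the concrete domain of $h(a)$.

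\emph{The main obstacle} I anticipate is showing $h$ respects composition, $h(a\comp b)=h(a)\comp h(b)$. The inclusion $h(a)\comp h(b)\subseteq h(a\comp b)$ should follow by composing the defining inequalities and using associativity together with monotonicity of $\compC$. The hard direction is the reverse: given $(X,Z)\in h(a\comp b)$, one must manufacture an intermediate closed set $Y$ with $(X,Y)\in h(a)$ and $(Y,Z)\in h(b)$. The natural candidate is something like $Y=\G(X\compC\au)$ (or a symmetrised version also involving $Z\compC(b^\conv)^\uparrow$), and the work is to verify that this $Y$ simultaneously satisfies all four required inequalities. This is exactly where axioms \ref{D5}, \ref{D6}, the domain/range interaction, and Lemma~\ref{lem:dab} do the heavy lifting, and where the specific reverse-inclusion ordering and the closure $\G$ must be used carefully so that the witness $Y$ is genuinely $\G$-closed and genuinely lands between $X$ and $Z$.

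\emph{Finally} I would prove injectivity: if $a\not\le b$ in $\A$, I would exhibit a pair $(X,Y)$ separating $h(a)$ from $h(b)$, the natural choice being built from principal up-sets such as $X=\G(a^\uparrow)$-style sets, using that the embedding $p\mapsto p^\uparrow$ reflects the order (meet-density of the completion, cf.\ Theorem~\ref{T;charac}) so that $(X,Y)\in h(a)\setminus h(b)$ for a suitable witness; antisymmetry of $\leq$ then upgrades order-reflection to injectivity. Throughout, the results of Sections~\ref{S;McompClos}--\ref{S;InPreserve}, and in particular Corollary~\ref{C;sahl} on inequality preservation, should let me avoid recomputing how the lifted operations behave and instead cite that the relevant ODA inequalities survive in $\G[\A]$.
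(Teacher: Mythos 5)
The paper offers no proof of this theorem at all: it defers entirely to the proof of \cite[theorem~2.2]{HirMik13}, so your proposal has to be judged against that (substantial) external argument, whose overall shape --- operation-by-operation verification, a witness construction for composition, order-reflection via distinguished closed sets --- your outline does share. The easy parts of your plan are sound: isotonicity of $h$; $h(\ide)$ equals the identity relation because $\idC$ is a $\compC$-unit; $h(a^\conv)=h(a)^\conv$ by the symmetry of the two defining clauses; and faithfulness via the pair $(\dom(a)^\uparrow,a^\uparrow)\in h(a)$, which lies in $h(b)$ only if $\dom(a)\comp b\geq a$, whence $a\leq\dom(a)\comp b\leq b$.

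The genuine gap is composition, and there your primary candidate witness is not just unverified but wrong. In the full proper ODA on base $\{1,2,3,4\}$ take $a=\{(1,2),(1,3)\}$, $b=\{(2,4)\}$, $X=\{(1,1)\}^\uparrow$, $Z=\{(1,4)\}^\uparrow$; then $(X,Z)\in h(a\comp b)$, but $Y=\G(X\compC a^\uparrow)=a^\uparrow$ fails the required inclusion $Z\compC (b^\conv)^\uparrow\subseteq Y$, since $\{(1,4)\}\comp b^\conv=\{(1,2)\}\in Z\compC(b^\conv)^\uparrow$ while $\{(1,2)\}\not\supseteq a$, so $\{(1,2)\}\notin a^\uparrow$. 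Hence only your parenthetical ``symmetrised version'' $Y=\G\bigl((X\comp a^\uparrow\cup Z\comp(b^\conv)^\uparrow)^\uparrow\bigr)$ can work, and verifying it is precisely the content of the theorem: one must show that the properties ``$y\comp a^\conv\in X$'' and ``$y\comp b\in Z$'' hold not only for the generators of $Y$ (which already needs the hypothesis together with \ref{D2}, \ref{D3}, \ref{D5}, \ref{D6} and the closure of $X$ and $Z$) but are preserved under the closure iteration $(u,v,w)\mapsto\dom(u)\comp v\comp\ran(w)$ of lemma \ref{L;Gconstruction}; this induction, and its analogue needed to produce a witness $Y$ from $(X,X)\in h(\dom(a))$, is where the Hirsch--Mikul\'as proof does its real work, and your proposal does not engage with it. A second misstep is your closing plan to invoke corollary \ref{C;sahl} so as to ``cite that the relevant ODA inequalities survive in $\G[\A]$'': the paper's own examples \ref{E;2fail}, \ref{E;6fail} and \ref{E;Afail} show that \ref{D2}, \ref{D6} and associativity \emph{fail} in $\G[\A]$, and in any case representation-hood of $h$ is a property of the relational structure on pairs of closed sets, not an equational property of the lifted algebra, so corollary \ref{C;sahl} cannot substitute for the witness constructions. (A smaller slip: $h(0)=\emptyset$ is false over the base exactly as stated, because $\emptyset$ and $0^\uparrow$ are $\G$-closed and the pairs $(\emptyset,\emptyset)$ and $(0^\uparrow,0^\uparrow)$ lie in every $h(a)$; the degenerate closed sets must be excised from the base, as is effectively done in \cite{HirMik13}.)
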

This theorem is proved, with minor notational variations, in the proof of \cite[theorem~2.2]{HirMik13}.

\begin{Lem}\label{L;ops}
Let $\A$ be an ODA with underlying poset $P$ and consider the closure operator $\G$. Then for $f\in\set{\dom, \ran, ^\conv}$ and $C\in\G[\Pc]$ we have $\gammap(C)=f[C]^\uparrow$. Moreover, $\oC=\{0\}^\uparrow$ and $\idC=\{\id\}^\uparrow$.  
\end{Lem}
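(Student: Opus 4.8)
The plan is to treat the two nullary operators and the three unary operators separately, reducing every case to the single question of whether $f[C]^\uparrow$ is already $\G$-closed. For the constants this is immediate: as observed after figure~\ref{F;newCom}, the lift of a nullary $f$ is $\gammap=\G(\set f^\uparrow)=\set f^\uparrow$, the second equality holding because $\G$ is standard and so fixes every principal up-set. Taking $f=0$ and $f=\id$ yields $\oC=\set0^\uparrow$ and $\idC=\set{\id}^\uparrow$ at once.

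For a unary $f$ the lifted operation is $\gammap(C)=\G(f[C]^\uparrow)$ (the case distinction in definition~\ref{def:bullet} being redundant), and since $\G$ is a closure operator we always have $\G(f[C]^\uparrow)\supseteq f[C]^\uparrow$; hence it suffices to show that $f[C]^\uparrow$ is $\G$-closed whenever $C$ is. By definition~\ref{D;closed} together with lemma~\ref{L;Gconstruction}, closedness will follow once I verify the one-step inclusion $\set{\dom(x)\comp y\comp\ran(z):x,y,z\in f[C]^\uparrow}^\uparrow\subseteq f[C]^\uparrow$, the iteration in lemma~\ref{L;Gconstruction} then handling the general case.

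The converse case is the easiest. Here $f[C]^\uparrow=C^\conv$, and this set is already up-closed because ${}^\conv$ is an isotone involution. Given $x=a^\conv,y=b^\conv,z=c^\conv$ with $a,b,c\in C$, I would rewrite $\dom(a^\conv)\comp b^\conv\comp\ran(c^\conv)$ as $\ran(a)\comp b^\conv\comp\dom(c)$ using \ref{D3} and its dual, then take its converse, using the self-converse property of domain and range elements (\ref{D1} and its dual) and the involution law for $\comp$, to obtain $\dom(c)\comp b\comp\ran(a)$. Since $C$ is $\G$-closed this lies in $C$, so the original element lies in $C^\conv$.

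The domain and range cases are the crux, and the main obstacle is a small computation showing that a product of domains of elements of $C$ is itself the domain of an element of $C$. For $\dom$, I would first show $w:=\dom(a)\comp\dom(b)\comp c\in C$ for $a,b,c\in C$, by exhibiting it in the defining closure form twice: using $t\comp\ran(t)=t$ (the dual of \ref{D5}) to supply a range factor, one gets first $\dom(b)\comp c\in C$ and then $w=\dom(a)\comp(\dom(b)\comp c)\in C$. Two applications of \ref{D8} then give $\dom(w)=\dom(a)\comp\dom(b)\comp\dom(c)$, so this triple product lies in $\dom[C]^\uparrow$. Finally, for arbitrary $x,y,z\in\dom[C]^\uparrow$ with $x\geq\dom(a)$, $y\geq\dom(b)$, $z\geq\dom(c)$, isotonicity together with \ref{D4} (which gives $\dom(\dom(a))=\dom(a)$ and $\ran(\dom(c))=\dom(c)$) yields $\dom(x)\comp y\comp\ran(z)\geq\dom(a)\comp\dom(b)\comp\dom(c)\in\dom[C]^\uparrow$, so $\dom(x)\comp y\comp\ran(z)$ lies in the up-set $\dom[C]^\uparrow$; the range case is entirely dual via the dual axioms. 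I expect the bookkeeping in this last step—keeping the closure property of $C$ and \ref{D8} in step—to be the only genuinely delicate part, the constant and converse cases being routine by comparison.
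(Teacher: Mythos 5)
Your proof is correct and follows essentially the same route as the paper's: both reduce each case to showing $f[C]^\uparrow$ is $\G$-closed, handle the constants via standardness, the converse via \ref{D3} and the involution laws, and the domain case by using \ref{D8} together with the closure of $C$ to exhibit a product of domain elements as $\dom(w)$ for some $w\in C$. The only differences are presentational: the paper folds $\dom(x)\comp\dom(y)\comp\dom(z)$ top-down using \ref{D4}, \ref{D7}, \ref{D8}, while you build $w=\dom(a)\comp\dom(b)\comp c\in C$ bottom-up and unfold $\dom(w)$, and you make explicit the isotonicity bookkeeping for arbitrary elements of $\dom[C]^\uparrow$ that the paper leaves implicit.
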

\begin{proof}
That $\oC$ and $\idC$ are just $\set 0^\uparrow$ and $\set\ide^\uparrow$ is direct from the definition of $\gammap$. For $\dom$ let $C\in\G[\Pc]$ and let $x,y,z\in C$. We are required to show that $\dom[C]^\uparrow$ is $\G$-closed. Now, $\dom(\dom(x))\comp\dom(y)\comp\ran(\dom(z))=\dom(x)\comp\dom(y)\comp\dom(z)=\dom(\dom(x)\comp y)\comp\dom(z)$ by ODA axioms \ref{D4}, \ref{D7}, and \ref{D8}. As $C$ is $\G$-closed we must have  $\dom(x)\comp y\in C$, so we have something of form $\dom(x')\comp\dom(z)$ for $x',z\in C$. Another application of \ref{D8} gives  $\dom(x')\comp\dom(z)=\dom(\dom(x')\comp z)$, and thus as $C$ is closed we have something of form $\dom(y')$ for $y'\in C$, which is in $\dom[C]$. The $\ran$ case is dual, and the $^\conv$ case follows from axiom \ref{D3} and the fact that $^\conv$ is an involution.
\end{proof}

%\begin{Not}
%Given $S\subseteq P$ we define $S^\conv=\{s^\conv:s\in S\}$, and we define $\dom(S)$ and $\ran(S)$ similarly. Given $S,T\subseteq %P$ we define $S\comp T=\{s\comp t:s\in S$ and $t\in T\}$.  Note that these sets are not necessarily closed upwards.
% By Lemma \ref{L;ops}, when $S$ is $\G$-closed the unary operations on $S$ defined in this way will coincide with their interpretation in %$\G[\A]$, e.g. $\dom(C)=\dom[C]^\uparrow=\G(\dom[C]^\uparrow)=\dom^\bullet(C)$ for all $\G$-closed sets $C$,  though this is not %the case for $\comp$. 
%\end{Not}

We ask how close $\G[\A]$ is to being an ODA. It turns out that most of the axioms \ref{D1}-\ref{D8} hold (proposition \ref{P;AxHold}), with the exceptions being \ref{D2} and \ref{D6} (Examples \ref{E;2fail} and \ref{E;6fail}), the operations on $\G[\A]$ remain isotone and normal, $\idC$ remains a left and right identity for composition and  $^{\convC}$ is still an involution (Lemma \ref{L;invoetc}). The dramatic deviation is that $\compC$ is not necessarily associative (Example \ref{E;Afail}). The remainder of this section will be taken up with proving the claims in this paragraph.

\begin{Prop}\label{P;AxHold}
Given ODA $\A$, axioms \ref{D1}, \ref{D3}, \ref{D4}, \ref{D5}, and \ref{D7} hold in $\G[\A]$.
\end{Prop}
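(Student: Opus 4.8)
The plan is to derive each axiom from the base ODA $\A$ via Corollary~\ref{C;sahl2} (for the equational parts) and Corollary~\ref{C;sahl} (for the single inequality $\dom(a)\leq\ide$ in \ref{D1}). For an equation $\phi=\psi$ already holding in $\A$, Corollary~\ref{C;sahl2} reduces matters to checking the \emph{closure condition} $\tau(C_1,\ldots,C_n)=\G(\tau[C_1\times\cdots\times C_n]^\uparrow)$ in $\G[\A]$ for each of $\tau\in\{\phi,\psi\}$, where $\tau(C_1,\ldots,C_n)$ is the value computed with the lifted operations $\gammap_i$. Since the interpretation via the $\gammap_i$ always satisfies $\G(\tau[C_1\times\cdots\times C_n]^\uparrow)\subseteq\tau(C_1,\ldots,C_n)$ (the inclusion noted in the proof of Corollary~\ref{C;sahl}), the content of the closure condition is only the reverse inclusion.

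The first step, which settles \ref{D1}, \ref{D3} and \ref{D4}, is a ``no-composition'' observation: if $\tau$ is any term built solely from $\dom,\ran,{}^\conv,0,\ide$, then for closed $C_1,\ldots,C_n$ one has $\tau(C_1,\ldots,C_n)=\tau[C_1\times\cdots\times C_n]^\uparrow$ and this set is $\G$-closed. This follows by induction on $\tau$: the variable case is immediate and a constant yields $\set 0^\uparrow$ or $\set\ide^\uparrow$, closed by Lemma~\ref{L;ops}; for $\tau=f(\tau')$ with $f\in\set{\dom,\ran,{}^\conv}$ the inductive hypothesis makes $\tau'(\ldots)=\tau'[\ldots]^\uparrow$ a closed set, so Lemma~\ref{L;ops} gives $\gammap_f(\tau'[\ldots]^\uparrow)=f[\tau'[\ldots]^\uparrow]^\uparrow$, which equals $\tau[\ldots]^\uparrow$ by isotonicity of $f$ and is closed. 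As each side of the equational parts of \ref{D1}, and of \ref{D3} and \ref{D4}, is such a term, the closure condition holds automatically and these transfer by Corollary~\ref{C;sahl2}. For the inequality $\dom(a)\leq\ide$ the larger term is the constant $\ide$, which trivially meets the hypothesis of Corollary~\ref{C;sahl}.

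For \ref{D5}, $\dom(a)\comp a=a$, the term $\psi=a$ is a variable and so meets the closure condition trivially; it remains to verify it for $\phi=\dom(a)\comp a$. Here $\phi[C]^\uparrow=\set{\dom(c)\comp c':c,c'\in C}^\uparrow$, and this equals $C$: the inclusion $C\subseteq\phi[C]^\uparrow$ is $\dom(c)\comp c=c$ (\ref{D5}), while for the reverse one writes $\dom(c)\comp c'=\dom(c)\comp c'\comp\ran(c')$ using the dual of \ref{D5}, exhibiting it as a generator $\dom(x)\comp y\comp\ran(z)$ with $x=c$, $y=z=c'\in C$, hence in the $\G$-closed $C$. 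Thus $\phi[C]^\uparrow=C=\G(\phi[C]^\uparrow)$; a parallel computation, using isotonicity of $\comp$ to absorb the up-closure in $\domC(C)=\dom[C]^\uparrow$, shows the $\G[\A]$-value $\domC(C)\compC C$ also equals $C$. Corollary~\ref{C;sahl2} then gives \ref{D5}.

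The main obstacle is \ref{D7}, whose nontrivial content is the composite $\dom(a)\comp\dom(b)$. Writing $E=\set{\dom(c)\comp\dom(d):c\in C,d\in D}^\uparrow$, the crux is to show $E$ is $\G$-closed and that $\domC(C)\compC\domC(D)=E$. Closedness of $E$ rests on the collapse of products of ``$C$-domain elements'': using \ref{D8} and the $\G$-closedness of $C$ one gets $\dom(c_1)\comp\dom(c_2)=\dom(\dom(c_1)\comp c_2)$ with $\dom(c_1)\comp c_2\in C$, so iterating, every finite $\comp$-product of elements of $\dom[C]$ is a single $\dom(c)$ with $c\in C$ (and likewise for $D$). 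Given $x,w,z\in E$, axioms \ref{D7} and \ref{D4} give $\dom(x),\ w,\ \ran(z)$ each above a domain element $\dom(c_i)\comp\dom(d_i)$, so $\dom(x)\comp w\comp\ran(z)$ dominates a product of six domain elements which, after commuting via \ref{D7} and collapsing the $C$-factors and the $D$-factors separately, equals $\dom(c'')\comp\dom(d'')\in E$; up-closedness of $E$ then places $\dom(x)\comp w\comp\ran(z)$ in $E$. From this the closure conditions for $\dom(a)\comp\dom(b)$, for $\dom(b)\comp\dom(a)$ (equal to $E$ by commutativity), and for $\dom(\dom(a)\comp\dom(b))$ (equal to $\dom[E]^\uparrow=E$ by Lemma~\ref{L;ops} and \ref{D7}) all follow, and Corollary~\ref{C;sahl2} delivers both equations of \ref{D7}. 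I expect the bookkeeping around the closedness of $E$—in particular the collapse of products of domain elements, which is precisely where the $\G$-closedness of $C$ and $D$ enters—to be the only genuinely delicate step.
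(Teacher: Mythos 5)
Your proof is correct and takes essentially the same route as the paper's: \ref{D1}, \ref{D3}, \ref{D4} via Lemma \ref{L;ops} together with Corollaries \ref{C;sahl}/\ref{C;sahl2}, \ref{D5} by showing the relevant up-sets coincide with $C$ itself, and \ref{D7} by the key step of proving $(\dom[C]\comp\dom[D])^\uparrow$ is $\G$-closed using \ref{D4}, \ref{D7} and the collapse of products of domain elements. The only cosmetic difference is that the paper performs that collapse using the $\G$-closedness of $\dom[C]^\uparrow$ (supplied by Lemma \ref{L;ops}), whereas you use \ref{D8} together with the $\G$-closedness of $C$ directly; the two are interchangeable.
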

\begin{proof}
That $\G[\A]\models \{\ref{D1}, \ref{D3}, \ref{D4}\}$ follows easily from corollary \ref{C;sahl} and Lemma \ref{L;ops}. Since $\domC(C_1)\compC\domC(C_2)=\G(\dom[C_1]\comp\dom[C_2]^\uparrow)$ for all $C_1,C_2\in\G[\A]$, by corollary \ref{C;sahl} it is a necessary and sufficient condition for 
$\G[\A]\models \ref{D7}$ that \begin{equation*}\G(\dom[\dom[C_1];\dom[C_2]]^\uparrow)=\dom[\G(\dom[C_1];\dom[C_2])]^\uparrow\end{equation*} for all $C_1,C_2\in\G[\A]$. We shall show that $(\dom[C_1];\dom[C_2])^\uparrow$ is $\G$-closed, as in that case the required equality follows from lemma \ref{L;ops}: Let $x_1,x_2,x_3\in C_1$, and let $y_1,y_2,y_3\in C_2$. Then 
\begin{align*}&\dom(\dom(x_1)\comp\dom(y_1))\comp\dom(x_2)\comp\dom(y_2)\comp \ran(\dom(x_3)\comp\dom(y_3))\\
=&\dom(x_1)\comp\dom(x_2)\comp\dom(x_3)\comp\dom(y_1)\comp\dom(y_2)\comp\dom(y_3)\end{align*}
 by axioms \ref{D4} and \ref{D7}. Since $\dom[C_1]^\uparrow$ and $\dom[C_2]^\uparrow$ are closed by lemma \ref{L;ops} it is easy to show that $\dom(x_1)\comp\dom(x_2)\comp\dom(x_3)\in\dom[C_1]^\uparrow $ and $\dom(y_1)\comp\dom(y_2)\comp\dom(y_3)\in\dom[C_2]^\uparrow$ and thus $\G[\A]\models \ref{D7}$ as required. That $\G[\A]\models \ref{D5}$ follows easily from lemma \ref{L;ops}.
\end{proof}

\begin{Lem}\label{L;invo}
For all $S\in\Pc$, $\G(S)^\conv=\G(S^\conv)$.
\end{Lem}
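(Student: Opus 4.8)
The plan is to use the iterative description of $\G$ from Lemma~\ref{L;Gconstruction} and show that converse commutes with that construction level by level. Writing $X_0 = S$ and $X_{n+1} = \{\dom(x)\comp y\comp\ran(z) : x,y,z\in X_n\}^\uparrow$, so that $\G(S) = \bigcup_\omega X_n$, and defining $Y_n$ analogously for $S^\conv$ with $Y_0 = S^\conv$, I would prove by induction on $n$ that $Y_n = (X_n)^\conv$. Taking the union over $\omega$ then gives $\G(S^\conv) = \bigcup_\omega Y_n = \bigcup_\omega (X_n)^\conv = (\bigcup_\omega X_n)^\conv = \G(S)^\conv$, since converse plainly commutes with arbitrary unions.

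The base case is immediate. For the inductive step the key is the algebraic identity
\[(\dom(x)\comp y\comp\ran(z))^\conv = \dom(z^\conv)\comp y^\conv\comp\ran(x^\conv),\]
which follows from the involution axioms together with \ref{D1} (giving $\dom(x)^\conv = \dom(x)$ and $\ran(z)^\conv = \ran(z)$) and \ref{D3} (giving $\ran(z) = \dom(z^\conv)$ and $\dom(x) = \ran(x^\conv)$). This identity shows that converse sends each generator $\dom(x)\comp y\comp\ran(z)$ of $X_{n+1}$ (with $x,y,z\in X_n$) to the generator $\dom(z^\conv)\comp y^\conv\comp\ran(x^\conv)$ built from $z^\conv,y^\conv,x^\conv\in (X_n)^\conv = Y_n$, and conversely by applying the identity in the other direction. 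Hence the set of generators for $Y_{n+1}$ is exactly the converse of the set of generators for $X_{n+1}$.

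It remains to check that converse commutes with the upward-closure operation, i.e.\ that $(T^\conv)^\uparrow = (T^\uparrow)^\conv$ for any $T\subseteq P$. This holds because converse is an order-automorphism of $P$: being isotone and an involution, it satisfies $a\leq b \iff a^\conv\leq b^\conv$, so $p\in(T^\conv)^\uparrow$ iff $t^\conv\leq p$ for some $t\in T$, iff $t\leq p^\conv$ for some $t\in T$, iff $p^\conv\in T^\uparrow$, iff $p\in(T^\uparrow)^\conv$. Combining this with the previous paragraph gives $Y_{n+1} = (X_{n+1})^\conv$, completing the induction.

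The only genuine computation is verifying the displayed identity from the ODA axioms; once that symmetry is in hand the remainder is bookkeeping, so I anticipate no real obstacle. The result is essentially a manifestation of the fact that the defining condition for $\G$-closedness is self-dual under converse.
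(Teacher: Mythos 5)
Your proof is correct, and it rests on the same engine as the paper's: the iterative construction of $\G$ from Lemma~\ref{L;Gconstruction} together with the identity $(\dom(x)\comp y\comp\ran(z))^\conv=\dom(z^\conv)\comp y^\conv\comp\ran(x^\conv)$, derived from the involution axioms plus \ref{D1} and \ref{D3}. The organization differs, though. The paper proves the two inclusions asymmetrically: the inclusion $\G(S^\conv)\subseteq\G(S)^\conv$ is obtained abstractly, by noting that $\G(S)^\conv$ is a $\G$-closed set containing $S^\conv$ (this step invokes Lemma~\ref{L;ops}) and appealing to the minimality of the closure; only the reverse inclusion $\G(S)^\conv\subseteq\G(S^\conv)$ is proved by induction, showing $X_k^\conv\subseteq\G(S^\conv)$ for every $k$. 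You instead prove the exact level-by-level equality $Y_n=(X_n)^\conv$ and take unions, which yields both inclusions simultaneously. What your route buys is symmetry and independence from Lemma~\ref{L;ops}; the small price is the extra verification that ${}^\conv$ commutes with upward closure, $(T^\conv)^\uparrow=(T^\uparrow)^\conv$, which you correctly justify from ${}^\conv$ being an isotone involution and hence an order-automorphism of $P$ (the paper's argument quietly relies on the same fact when it treats $\G(S)^\conv$ as an element of $\Pc$). Both arguments are of comparable length and difficulty, so this is a genuine but modest variation rather than a new idea.
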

\begin{proof}
Since $S^\conv\subseteq \G(S)^\conv$ and $\G(S)^\conv$ is $\G$-closed by lemma \ref{L;ops}, $\supseteq$ follows from the isotonicity of closure operators. Define $X_0=S$ and $X_n$ as in lemma \ref{L;Gconstruction} for all $n\in\omega$. Then $X_0^\conv=S^\conv\subseteq\G(S^\conv)$, and for all $k\in\omega$ and every $a\in X_k$ we have  $a\geq b=\dom(b_1)\comp b_2\comp \ran(b_3)$ for some $b_1,b_2,b_3\in X_{k-1}$. So $b^\conv=\dom(b_3^\conv)\comp b_2^\conv\comp \ran(b_1^\conv)$ by involution and axioms \ref{D1} and \ref{D3}, and thus if $X_{k-1}^\conv\subseteq \G(S^\conv)$ then $X_k^\conv\subseteq \G(S^\conv)$. Since $\G(S)^\conv=\bigcup_{n\in\omega}X_n^\conv$ we are done.
\end{proof}

\begin{Lem}\label{L;invoetc}
For all $f\in\{\comp,\dom,\ran,{}^\conv,0,\ide\}$ the extension $\gammap$ is isotone and normal, moreover 
\begin{enumerate}
\item$\idC$ is a left and right identity for $\compC$, and 
\item$^{\convC}$ is an involution.
\end{enumerate}
\end{Lem}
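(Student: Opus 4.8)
\section*{Proof proposal}

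The plan is to treat the five assertions --- isotonicity, normality, and then the two named properties (left/right identity and involution) --- separately, leaning throughout on the explicit descriptions of the lifted operations that are already in hand. The single most useful input is lemma~\ref{L;ops}: on closed sets $\domC$, $\ranC$ and ${}^{\convC}$ are computed by applying the underlying operation pointwise and taking the up-closure, while $\oC=\set{0}^\uparrow$ and $\idC=\set{\ide}^\uparrow$. Isotonicity of every $\gammap$ is then almost immediate. Since $\G$ is monotone for inclusion and each $f_i$ is isotone on $\A$, if $C_i\supseteq D_i$ for all $i$ then $f_i[C_1\times\dots\times C_n]^\uparrow\supseteq f_i[D_1\times\dots\times D_n]^\uparrow$, so passing to $\G$ preserves the reverse-inclusion order; the degenerate case where some argument is $\emptyset$ is absorbed by the case split of definition~\ref{def:bullet}, as the value is then the top element $\G(\emptyset)$ and nothing lies above it. (Alternatively one may simply quote that the lift of an isotone map is order-preserving by construction.)

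For normality I would first record that $\oC=\set{0}^\uparrow$ equals all of $P$, the bottom of $\G[\Pc]$, because $0$ is the least element of $\A$. The unary identities $\domC(\oC)=\ranC(\oC)=(\oC)^{\convC}=\oC$ then follow from lemma~\ref{L;ops} together with $\dom(0)=\ran(0)=0^\conv=0$ in $\A$: each of $\dom[P]^\uparrow$, $\ran[P]^\uparrow$ and $(P^\conv)^\uparrow$ contains $0$ and hence closes up to all of $P$. For the composition clauses $\oC\compC C=C\compC\oC=\oC$ I would argue that whenever $C\neq\emptyset$ the product $\oC\comp C$ contains $0\comp c=0$, so $(\oC\comp C)^\uparrow=P$ and $\G(P)=P=\oC$, and symmetrically on the other side.

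The two named properties I would handle by direct computation backed by lemma~\ref{L;ops}. For $\idC\compC C=C$ (and its mirror) observe that $\set{\ide}^\uparrow\comp C=C$: every $i\comp c$ with $i\geq\ide$ satisfies $i\comp c\geq\ide\comp c=c\in C$ by isotonicity, hence lies in $C$, while $c=\ide\comp c$ gives the reverse inclusion; so $\idC\compC C=\G(C)=C$. For the involution, $(C^{\convC})^{\convC}=C$ is a two-line calculation from lemma~\ref{L;ops} using that $^\conv$ is an order-automorphism of $P$, and the anti-distributive law $(a\comp b)^\conv=b^\conv\comp a^\conv$, valid in $\A$, I would transfer through corollary~\ref{C;sahl2}; there the only non-formal step is the closure-stability hypothesis for the term $(a\comp b)^\conv$, for which lemma~\ref{L;invo} ($\G(S)^\conv=\G(S^\conv)$) lets one commute converse past the closure.

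I expect the one genuinely delicate point to be the behaviour at the empty set, the new top element of $\G[\Pc]$. The identity and involution laws survive there automatically, since both sides collapse to $\G(\emptyset)=\emptyset$; but the composition clause of normality is precisely where the case split matters, and making the annihilator behaviour of $0$ interact correctly with the convention $f[\emptyset]=\emptyset$ is the step I would treat most carefully. The remaining fussy check is the closure-stability hypothesis for the converse of a composite, where one must pass between $(S^\uparrow)^\conv$ and $(S^\conv)^\uparrow$ via lemma~\ref{L;invo} rather than by naive manipulation.
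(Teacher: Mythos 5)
Your proposal is correct and follows the paper's own proof almost exactly: isotonicity is dismissed as automatic from the lifting, normality is derived from $\oC=\{0\}^\uparrow$, the identity law is a direct computation from $\idC=\{\ide\}^\uparrow$ and the definition of $\compC$, and the law $(a\comp b)^\conv=b^\conv\comp a^\conv$ is transferred by corollary~\ref{C;sahl2}, with lemma~\ref{L;ops} supplying closure-stability for $x^\conv\comp y^\conv$ and lemma~\ref{L;invo} supplying it for $(x\comp y)^\conv$ --- precisely the two inputs the paper uses (your explicit check of $(C^{\convC})^{\convC}=C$ is a detail the paper leaves implicit).

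One remark on the point you flag as delicate, because your caution is better founded than you state: the composition clause of normality does not merely require care at the empty set, it \emph{fails} there. The set $\emptyset$ is $\G$-closed (it is the top element of $\G[\Pc]$ under the ordering $\supseteq$), and the convention $f[\emptyset]=\emptyset$ forces $\oC\compC\emptyset=\G(\emptyset)=\emptyset$, whereas $\oC=\{0\}^\uparrow\neq\emptyset$. So the identity $0\comp a=a\comp 0=0$ holds in $\G[\A]$ only when $a\neq\emptyset$, which is exactly the case your $0\comp c=0$ argument covers; no argument can cover the remaining case. The paper's proof (``normality follows from the fact that $\oC=\{0\}^\uparrow$'') passes over this silently, so your write-up is no less complete than the published one; the honest reading is that the normality claim of the lemma must be restricted to non-empty closed sets. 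As you correctly observe, the identity and involution laws are immune to this defect, since at $\emptyset$ both sides of those equations collapse to $\G(\emptyset)=\emptyset$.
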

\begin{proof}
Isotonicity of the operations is automatic from the lifting process, and normality follows from the fact that $\oC=\{0\}^\uparrow$. That $\idC$ is a left and right identity for $\compC$  follows easily from the definition of $\compC$ and the fact that $\idC=\{\ide\}^\uparrow$. To see that $(a\comp b)^\conv= b^\conv \comp a^\conv$ holds in $\G[\A]$ define terms $\phi(x,y)=(x\comp y)^\conv$ and $\psi(x,y)= x^\conv \comp y^\conv$ in $\mathscr{L}_{\mP}$. Then using lemma \ref{L;ops} it's easy to see that in $\G[\A]$ we have $\psi(C,D)=\G(\psi[C\times D]^\uparrow)$ for all $C,D\in\G[\A]$, and that $\phi(C,D)=\G(\phi[C\times D]^\uparrow)$ follows from lemma \ref{L;invo}. The result then follows from corollary \ref{C;sahl2}.
\end{proof}

\begin{Lem}\label{lem:union}
Let $X, Y\in\G[\A]$.  If $\domC(X)=\domC(Y)$ and $\ranC(X)=\ranC(Y)$ then $X\cup Y\in\G[\A]$.
\end{Lem}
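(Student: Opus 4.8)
The plan is to show directly that $X\cup Y$ is $\G$-closed, that is, that $\{\dom(x)\comp y\comp\ran(z):x,y,z\in X\cup Y\}^\uparrow = X\cup Y$; since $X$ and $Y$ are up-sets so is $X\cup Y$, and this equation is exactly the condition for membership in $\G[\A]$. The inclusion $X\cup Y\subseteq\{\dom(x)\comp y\comp\ran(z):x,y,z\in X\cup Y\}^\uparrow$ is immediate: any $w\in X$ already lies in $\{\dom(x)\comp y\comp\ran(z):x,y,z\in X\}^\uparrow$ because $X$ is $\G$-closed, and this set is contained in the one generated by $X\cup Y$; symmetrically for $w\in Y$. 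So the entire content lies in the reverse inclusion, and since $X\cup Y$ is up-closed it suffices to show that each generator $\dom(x)\comp y\comp\ran(z)$ with $x,y,z\in X\cup Y$ lands in $X\cup Y$.

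The key observation is that only the \emph{middle} factor $y$ decides which of $X$, $Y$ the product falls into, while the domain/range hypotheses let us relocate the outer factors. First rewrite the hypotheses via lemma~\ref{L;ops} as $\dom[X]^\uparrow=\dom[Y]^\uparrow$ and $\ran[X]^\uparrow=\ran[Y]^\uparrow$. Suppose $y\in X$ (the case $y\in Y$ is entirely symmetric, swapping the roles of $X$ and $Y$). If $x\in Y$ then $\dom(x)\in\dom[Y]^\uparrow=\dom[X]^\uparrow$, so there is $x'\in X$ with $\dom(x')\leq\dom(x)$; if $x\in X$ take $x'=x$. Likewise obtain $z'\in X$ with $\ran(z')\leq\ran(z)$. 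Now $x',y,z'\in X$, so $\dom(x')\comp y\comp\ran(z')\in X$ because $X$ is $\G$-closed, and by isotonicity of $\comp$ we have $\dom(x')\comp y\comp\ran(z')\leq\dom(x)\comp y\comp\ran(z)$. Since $X$ is an up-set, this forces $\dom(x)\comp y\comp\ran(z)\in X\subseteq X\cup Y$, as required.

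The main obstacle is recognizing this asymmetric role of $y$: a naive case split on all of $x,y,z$ into $X$ or $Y$ produces many cases with no evident way to control a product whose factors straddle both sets, and the hypotheses $\domC(X)=\domC(Y)$, $\ranC(X)=\ranC(Y)$ seem to say nothing about the middle factor. The resolution exploits that we order by reverse inclusion and that the members of $\G[\A]$ are up-sets, so it is enough to exhibit a \emph{smaller} element of $X$ (resp.\ $Y$) below $\dom(x)\comp y\comp\ran(z)$; the two hypotheses supply exactly the domain and range elements needed to pull $x$ and $z$ into the same set as $y$ without increasing the product. The degenerate possibilities $X=\emptyset$ or $Y=\emptyset$ need no separate treatment, since the hypotheses then force both sets to be empty and the generating set of $X\cup Y=\emptyset$ is empty, so the argument above applies verbatim.
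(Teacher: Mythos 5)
Your proof is correct and takes essentially the same route as the paper's: without loss of generality place the middle factor in $X$, use $\domC(X)=\domC(Y)$ and $\ranC(X)=\ranC(Y)$ (via lemma~\ref{L;ops}) to relocate the outer factors into $X$, and conclude by the closure of $X$. The only difference is one of exposition: you spell out the steps the paper compresses into ``by the closure of $X$'' --- replacing $\dom(x)$ and $\ran(z)$ by $\dom(x')\leq\dom(x)$, $\ran(z')\leq\ran(z)$ with $x',z'\in X$ and then invoking isotonicity and up-closedness --- and you also make explicit the trivial reverse inclusion and the empty-set case.
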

\begin{proof}
Let $z_1, z_2, z_3\in X\cup Y$.  We are required to prove that \begin{equation*}\dom(z_1);z_2;\ran(z_3)\in X\cup Y\end{equation*}  Without loss of generality, let $z_2\in X$.  Since $\domC(X)=\domC(Y)$ we know that $\dom(z_1)\in \domC(X)$ and similarly $\ran(z_3)\in\ranC(X)$, hence $\dom(z_1);z_2;\ran(z_3) \in X$, by the closure of $X$.
\end{proof}

\begin{Ex}\label{E;2fail}\emph{\ref{D2} can fail in $\G[\A]$}.
Let $\A$ be the full proper ODA over a base of four elements $\{a,b,c,d\}$. Define $x,y\in\A$ by $x=\{(a,b),(c,d)\}$, and $y=\{(a,d),(c,b)\}$. Let $A=\set{x, y}^\uparrow$.   Then $\dom(x)=\dom(y)$ and $\ran(x)=\ran(y)$, and consequently $A$  is $\G$-closed.  We aim to show that $A\compC A^{\convC}\not\subseteq \domC(A)$. For this claim, $x\comp y^\conv\in \G(A\comp A^\conv)^\uparrow$, \/ $x\comp y^\conv=\{(a,c),(c,a)\}$, and $\domC(A)=\dom(x)^\uparrow=\dom(y)^\uparrow=\{(a,a),(c,c)\}^\uparrow$,  so 
$x\comp y^\conv\not\in\domC(A)$ 
hence 
$A\compC A^{\convC} \not\subseteq \domC A$  
and thus
 $\G[\A]\not\models \ref{D2}$. 
\end{Ex}

\begin{Ex}\label{E;6fail}\emph{\ref{D6} can fail in $\G[\A]$}.
Let $\A$ be the full proper ODA over the two element base $\{a,b\}$. Define $x=\{(a,b),(b,a)\}$ and let $\ide=\{(a,a),(b,b)\}$ be the identity as usual. Let $B=\{x,\ide\}^\uparrow$. Then, as $\dom(x)=\dom(\ide)=\ran(x)=\ran(\ide)=\ide$, $B$ is $\G$-closed. Define $A=\set{\{(a, a)\}}^\uparrow$. Then 
\begin{align*}
A\compC  B&=\set{\set{(a,a)}}^\uparrow\compC\set{\ide, x}^\uparrow\\
&=\G(\set{\set{(a, a)}, \set{(a, b)}}^\uparrow)\\
&=\emptyset^\uparrow
\end{align*}
because $\set{(a, b)};\ran\set{(a,a)}=\emptyset$.  Hence $\domC(A\compC B)=\domC(\emptyset^\uparrow)=\emptyset^\uparrow$.
However, $\domC(B)=\idC$ so $\domC(A\compC\domC B)=\domC A=\set{(a, a)}^\uparrow \neq \dom(A\compC B)$ and thus $\G[\A]\not\models \ref{D6}$.
\end{Ex}

\begin{Ex}\label{E;Afail}\emph{Associativity can fail in $\G[\A]$}.
Let $\A$ be the full proper ODA over a base of five elements $\{a,b,c,d,e\}$, let $x=\{(a,a)\}$, let $y=\{(a,b),(c,d)\}$, let $z=\{(a,d),(c,b)\}$, and let $u=\{(b,e),(d,e)\}$. Define $A=\set x^\uparrow,\; B=\{y,z\}^\uparrow$, and $C=\set u^\uparrow$. Then $A$ and $C$ are principal and hence $\G$-closed, and $\dom(z)=\dom(y)$ and $\ran(z)=\ran(y)$ so $B$ is also $\G$-closed. Now, $A\compC B=\G(\{x\comp y, \;  x\comp z\}^\uparrow)=\G(\{\{(a, b)\},\; \{(a,d)\}\}^\uparrow)=\emptyset^\uparrow$, as $\{(a,b)\}\comp \ran(\{(a,d)\})=\emptyset$, so $(A\compC B)\compC C=\emptyset^\uparrow$.  However, $B\compC C=\G\{y\comp u, \; z\comp u\}^\uparrow=\set{\{(a,e),\; (c,e)\}}^\uparrow$, which is principal and hence $\G$-closed. Thus  $A\compC( B\compC C)=\G (\{x\comp y\comp u, \;x\comp z\comp u\}^\uparrow)=\G(\set{\{(a,e)\}}^\uparrow)=\set{\{(a,e)\}}^\uparrow\neq\emptyset^\uparrow$, and so $(A\compC B)\compC C
\neq A\compC( B\compC C)$.
%$(\G(A\compC B)\compC C)\neq \G(A\compC \G(B\compC C))$.  
This example also shows that the \emph{weak associativity law}, where associativity is only required for  $A\leq \idC$, can fail in $\G[\A]$.
\end{Ex}

\begin{Prob}
Consider the partial binary relation $*$ on the base of $\G[\A]$ where $B*C$ is only defined (for $B, C\in\G[\A]$)  if $\ranC(B)=\domC(C)$ and then $B*C=B\compC C$.  Is $*$ associative, i.e. is $A*(B*C)=(A*B)*C$ whenever either side is defined?
\end{Prob}
\end{section}

\begin{section}{Conclusions and further work}\label{S:conclusions}
We have shown how to lift the operators of an isotone poset expansion to a meet-completion.  We have identified a family of equations preserved in certain  meet-completions.  For the particular case of ordered domain algebras we have   defined a meet-completion $\G[\A]$ of an ordered domain algebra $\A$ and shown that some, but not all, of the equations defining ordered domain algebras are inherited by $\G[\A]$.  Furthermore we have seen that $\G[\A]$ may be used as the base of a representation of $\A$. 

 It may be of interest to compare all this with similar research in the field of Boolean algebras with operators (BAOs), in particular  relation algebras, where the signature is more expressive.   There are many possible completions of a BAO ranging from the \emph{MacNeille completion} (in some of the references below this is simply called \emph{the} completion) up to the \emph{canonical extension}.   Much work has been done to identify a large class of formulas preserved by completions, e.g. \cite{Sah75,deRVen95,GivVen99}. More generally, canonical extensions, MacNeille completions, and other completions for lattice and poset expansions have received considerable attention in recent years, partly due to their connections to non-classical logics (see e.g. \cite{DGP05,GHV06,GNV05}).

The \emph{canonical extension} of a Boolean algebra with operators can be defined by the second dual of the Boolean algebra, with operators lifted from the algebra \cite{JonTar51}. An equation is \emph{canonical} if it holds in the canonical extension of an algebra whenever it holds in the algebra itself.  Not all equations are canonical, but all the equations defining the class of relation algebras are canonical, \cite{JonTar52} or \cite[theorem~3.16]{HirHod02}.  By a theorem due to J. Monk, the class of all representable relation algebras is a canonical class (an algebra is representable if and only if its canonical extension is, for a proof see \cite{Mad83} or \cite[theorem~3.36]{HirHod02})   but any equational axiomatization of this representation class must involve infinitely many non-canonical equations \cite{HodVen05}.

The \emph{MacNeille completion} of a BAO preserves essentially infinite meets and joins. Again the operators are lifted from the algebra to its completion.  It has been shown that a representable relation algebra can have an unrepresentable MacNeille completion \cite{HirHod02a}, hence some equations fail to be preserved when passing to the MacNeille completion.

Bearing this in mind there are two primary directions the work here can be extended. First, by building a deeper understanding of the preservation of inequalities by meet-completions, in the spirit of \cite{Sah75} and in particular its numerous algebraic descendents (e.g. \cite{GNV05,GivVen99,Gold89}). Section \ref{S;InPreserve} makes a small step in this direction, but it seems likely that the results here could be refined and extended considerably. Second, the inspiration for this paper was the implicit appearance of the meet-completion structure described in section \ref{S;ODAcomp} in the results of \cite{HirMik13}. It remains to be seen whether this is an isolated event or whether the role of the completion in the representation process hints at some deeper structure which could possibly be used for further representation results. With these thoughts in mind we propose some problems we believe are of particular interest:

\begin{Prob} Let $\A$ be an ODA and let $\Gamma$ be a standard closure operator defining a completion of $\A$.
Under what conditions is the completion $\G[\A]$ an ODA, or at least when is the completion associative? More generally, under what conditions is $\Gamma[\A]$ associative?
\end{Prob}
\begin{Prob}
Is it possible to generalize the definition of a representation of an ordered domain algebra in such a way that the completion $\G[\A]$ of an ordered domain algebra does possess a weak representation.  C.f. For relation algebras we can generalize the notion of a representation to a \emph{relativized representation} where all operators are restricted to some reflexive and symmetric (but not necessarily transitive) maximal relation.  When evaluated in a relativized representation, composition need not  be associative. 
A \emph{weakly associative algebra} is a relation-type algebra obeying all the relation algebra axioms except perhaps associativity and satisfying the weak associativity axiom $(x;1);1=x;(1;1)$ instead.  Maddux proved that the class of relation-type algebras isomorphic to algebras of binary relations with relativized operators is  the class of weakly associative algebras \cite{Mad82}.

\end{Prob}

\begin{Prob}
All of the ODA axioms except associativity, \ref{D2} and \ref{D6} are valid over $\G(ODA)=\set{\G[\A]:\A \in {ODA}}$.     Can this set of axioms be extended to a (finite) set of formulas (or equations)  so as to define the closure under isomorphism of $\G(ODA)$?
\end{Prob}
\end{section}

\end{document}